\def\data{\ifcase\month\or January\or February \or March\or April\or May
\or June\or July\or August\or September\or October\or November
\or December\fi\space\number\day, \number\year}
\def\pv#1{\ensuremath{{\mathsf{#1}}}}
\def\GAP{\textsf{GAP}}
\def\NN{\hbox{\mbox{$\mathbb N$}}}
\def\N{\hbox{\mbox{$\mathbb N$}}}
\newtheorem{theorem}{Theorem}[section]
\newtheorem{proposition}[theorem]{Proposition}
\newtheorem{lemma}[theorem]{Lemma}
\newtheorem{corollary}[theorem]{Corollary}
\newtheorem{fact}[theorem]{Fact}
\newtheorem{question}[theorem]{Question}
\newtheorem{conjecture}[theorem]{Conjecture}
\theoremstyle{remark}
\newtheorem{remark}[theorem]{Remark}
\newtheorem{example}[theorem]{Example}
\def\max{\mathop{max}}
\def\rqns{{\tt rqns}}
\def\all{^>}
\title{Rees quotients of numerical semigroups} \author{Manuel Delgado}
\address{Manuel Delgado, Centro de Matemática da Universidade do Porto, Departamento de Matematica, Faculdade de Ci\^encias,
  Universidade do Porto, Rua do Campo Alegre 687, 4169-007 Porto, Portugal}
\email[M. Delgado]{mdelgado@fc.up.pt} \thanks{The first author was partially funded by the European Regional Development Fund
through the program COMPETE and by the Portuguese Government through the FCT -
Funda{\c c}{\~a}o para a Ci{\^e}ncia e a Tecnologia under the project
PEst-C/MAT/UI0144/2011. He benefited also of the sabbatical grant SFRH/BSAB/1156/2011.}
\author{V\'\i tor H. Fernandes} \address{{\sc V\'\i tor H. Fernandes},
Departamento de Matem\'atica,
Faculdade de Ci\^encias e Tecnologia,
Universidade Nova de Lisboa,
Monte da Caparica,
2829-516 Caparica,
Portugal;
also (second address):
Centro de \'Algebra da Universidade de Lisboa,
Av. Prof. Gama Pinto 2,
1649-003 Lisboa,
Portugal;
}
\email[V. H. Fernandes]{vhf@fct.unl.pt} \thanks{The second author gratefully acknowledges support of FCT and PIDDAC, within the projects ISFL-1-143 and
PTDC/MAT/69514/2006 of CAUL}
\subjclass[2010]{20M14, 20M05, 20M07}
\begin{document}
\begin{abstract}
  We introduce a class of finite semigroups obtained by considering Rees
  quotients of numerical semigroups.
  Several natural questions concerning this class, as well as particular
  subclasses obtained by considering some special ideals, are answered while
  others remain open.  We exhibit nice presentations for these semigroups and
  prove that the Rees quotients by ideals of $\NN$, the positive integers
  under addition, constitute a set of generators for the pseudovariety of
  commutative and nilpotent semigroups.
\end{abstract}

\maketitle

{\tiny \data}

%%% =====================================================================%%%%%
\section{Introduction and motivation}\label{sec:motivation}
A \emph{numerical semigroup} is a co-finite subsemigroup of the non-negative
integers, under addition.
It is well known that a numerical semigroup has a (unique) minimal set of
generators, which is finite. The smallest integer from which all the integers
belong to a numerical semigroup is called the \emph{conductor} of that
semigroup. The (finite) set of elements of the semigroup not greater than the
conductor, named \emph{small elements}, also determines the semigroup. We
just mentioned two (in general) different finite sets of integers that
determine a given numerical semigroup but others could be considered. This
motivates the following somehow vague question, which has been the starting
point for the research presented in this paper:

\begin{question}\label{quest:original}
  Can a numerical semigroup be thought as a finite semigroup?
\end{question}

We consider this question imprecise due to the usual fact that one does not
distinguish between isomorphic semigroups. In the above motivation, concrete
finite sets determining the numerical semigroups have been considered.

The following is a particular case of the approach we have taken. Given a
numerical semigroup $S$ we choose a cutting point $k\in S$ and a
distinguished element $\infty$ not in $S$. Let $F=\{s\in S\mid s\le k\}$ and
take $Q=F\cup \{\infty\}$. In $Q$ we define a (commutative and associative)
operation as follows: $\infty\oplus q=q\oplus\infty=\infty$, for any $q\in
Q$; for $a,b\in F$, $a\oplus b= a+b$ if $a+b\in F$ and $a\oplus b=\infty$
otherwise.  The set $F$ is called the \emph{finite part} of the finite
commutative semigroup $Q$. Notice that $Q$ is a Rees quotient of $S$ by an
ideal (formed by the elements of $S$ that are greater than $k$), which
justifies the use of the terminology \emph{Rees quotient semigroup}.

One could be tempted to ask whether taking as cutting point an integer
greater than any of the minimal generators, or greater than the conductor,
the finite quotient obtained by the above construction contains sufficient
data to determine the numerical semigroup and somehow answering positively
Question~\ref{quest:original}. This is far from being the case: we can have
more than one numerical semigroup giving rise to the same quotient (up to
isomorphism), as many of the forthcoming examples show.

This paper is written as follows: after this brief section mainly devoted to
the motivation for the research presented, we give the main definitions and
introduce the notation to be used.  We proceed with a section containing
examples and simple remarks that intend to give answers to several questions
that can naturally be raised once one wants to consider the class of finite
semigroups introduced in this paper: Rees quotients of numerical semigroups.
Next we obtain a nice presentation for a Rees quotient numerical semigroup
once it is known the defining pair (numerical semigroup, ideal). Observe that
the study of presentations appears naturally in both the theories of
numerical semigroups and of finite semigroups.
In particular, classifying finite semigroups into pseudovarieties is one of
the main subjects of study in finite semigroup theory. In
Section~\ref{sec:pseudovarieties} we determine the pseudovariety generated by
the Rees quotients of numerical semigroups (in fact, the quotients of $\NN$
suffice): it is the class of finite commutative nilpotent semigroups.
In a final section we raise several questions for which we have not been able
to get answers and left them as open problems, pointing out that some
research work on this subject can be pursued.

%%%%% ===================================================================%%%%%
\section{Definitions and notation}\label{sec:defs_and_notation}
Our reference for numerical semigroups is the book by Rosales and
García-Sánchez~\cite{RosalesGarcia2009Book-Numerical}.  One may use the
\GAP~\cite{GAP4} package~\cite{DelgadoMoraisGarcia-Sanchez:numericalsgps} for
computations with numerical semigroups so as with (relative) ideals. Other
relevant reference for this paper is \cite{Almeida1995Book-Finite}. It
contains everything we need on finite semigroups, namely the result on
pseudovarieties of commutative semigroups used to prove
Theorem~\ref{th:pseudovariety_generators}.

Except for numerical semigroups or their quotients, as usual, we always
assume multiplicative notation.
%%%%% -------------------------------------------------------------------%%%%%
\subsection{Numerical Semigroups}\label{subsec:numerical_semigroups}
In this paper, for convenience, we shall use the terminology \emph{numerical semigroup} for a
co-finite subsemigroup of the non-negative integers, under addition.
Traditionally, it is required that a numerical semigroup contains the $0$,
which works as an identity, and therefore numerical semigroups are
monoids. The terminology ``numerical monoid'' also appears in the
literature. This makes usually no difference in the theory development. 
Despite, the following notation is useful: 
given a numerical semigroup $S$, we denote by $S^0$ the monoid obtained from $S$ by
adjoining the integer $0$ (which is the identity).  In particular, denoting
by $\NN$ the semigroup of positive integers, $\NN^0$ denotes the monoid of
non-negative integers under addition.

Given a subsemigroup $S$ of $\NN$, let $d=\gcd(S)$. It is easy to see that
$\{\frac{s}{d}\mid s\in S\}$ is a co-finite subsemigroup of $\NN$ (thus a
numerical semigroup) that is isomorphic to $S$. Therefore, each isomorphism
class of the set of subsemigroups of $\NN$ contains a numerical
semigroup. That contains exactly one (which implies that different numerical
semigroups are non isomorphic) is well known and is a consequence of the
following proposition which may be seen as a direct proof of this fact and is
stated here for the sake of completeness.

\begin{proposition}\label{prop:isomorphism}
  Let $\varphi:S\to T$ be a surjective homomorphism from a numerical semigroup $S$
  to a numerical semigroup $T$. Then $S=T$.
\end{proposition}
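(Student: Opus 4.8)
The plan is to show first that, by commutativity, $\varphi$ is forced to be ``multiplication by a fixed positive rational'', and then to use co-finiteness of $T$ to pin that rational down to $1$.

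The key observation comes from the commutativity of addition in $\NN$. For $a\in S$ and any positive integer $n$, the element $na=a+\cdots+a$ ($n$ summands) lies in $S$, and an immediate induction using that $\varphi$ is a homomorphism gives $\varphi(na)=n\varphi(a)$. Now take arbitrary $a,b\in S$ and look at the integer $ab\in S$: writing it as $b+\cdots+b$ ($a$ summands) yields $\varphi(ab)=a\varphi(b)$, while writing it as $a+\cdots+a$ ($b$ summands) yields $\varphi(ab)=b\varphi(a)$. Hence $a\varphi(b)=b\varphi(a)$ for all $a,b\in S$. Fixing one element $a_0\in S$ and putting $r=\varphi(a_0)/a_0\in\mathbb{Q}$, which is positive since $\varphi$ takes values in the positive integers, we conclude $\varphi(s)=rs$ for every $s\in S$.

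Next I would check that $r$ is in fact a positive integer: since $S$ is co-finite it contains two consecutive integers $m,m+1$, and then $rm$ and $r(m+1)$ both lie in $T\subseteq\NN$, so their difference $r$ is a (positive) integer. Finally, the surjectivity hypothesis gives $T=\varphi(S)=\{rs\mid s\in S\}$, a set all of whose elements are multiples of $r$; but $T$, being co-finite in $\NN$, must contain two consecutive integers, and that is impossible unless $r=1$. Therefore $\varphi(s)=s$ for all $s\in S$, so $\varphi$ is the inclusion $S\hookrightarrow T$, and surjectivity forces $S=T$. The only step requiring any care is the first one — extracting the rigid multiplicative form of $\varphi$ from the bare homomorphism property — after which co-finiteness finishes the argument almost for free.
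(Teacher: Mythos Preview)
Your argument is correct and follows essentially the same route as the paper's: both proofs derive the identity $a\,\varphi(b)=b\,\varphi(a)$ from $\varphi(ab)$ and use it to see that $\varphi$ is multiplication by a fixed positive rational, then invoke co-finiteness to force that rational to be~$1$. The only cosmetic difference is that the paper phrases co-finiteness as $\gcd$ of a generating set equal to~$1$ (for both $S$ and $T$), whereas you phrase it as the existence of two consecutive integers; these are equivalent ways of finishing the same argument.
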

\begin{proof}
  Let $A=\{a_1,a_2,\ldots,a_n\}$ be a set of generators of $S$. In
  particular, $\gcd (A)=1$. Since $\varphi$ is surjective, then $\varphi(A)$ is a
  set of generators of $T$ and therefore, as $T$ is a numerical semigroup, we
  obtain $\gcd (\varphi(A))=1$.
 
  For each $i\in \{1,\ldots,n\}$, we have
  $a_1\varphi(a_i)=\varphi(a_1a_i)=\varphi(a_ia_1)=a_i\varphi(a_1)$. It
  follows that $a_1 \mid a_i\varphi(a_1)$, for all $i\in
  \{1,\ldots,n\}$. Since $\gcd (a_1,a_2,\ldots,a_n)=1$, we get that $a_1 \mid
  \varphi(a_1)$ and that $\frac{\varphi(a_1)}{a_1}\mid \varphi(a_i)$, for any
  $i\in \{1,\ldots,n\}$. This implies that $\frac{\varphi(a_1)}{a_1}\mid \gcd
  (\varphi(A))$ and so $a_1=\varphi(a_1)$. Consequently $a_i=\varphi(a_i)$,
  for any $i\in \{1,\ldots,n\}$. Thus $S=T$, as required.
\end{proof}
%%%%% -------------------------------------------------------------------%%%%%
\subsection{Notable elements}\label{subsec:notable}
Let $S$ be a numerical semigroup.  The greatest integer not belonging to $S$
is called the \emph{Frobenius number} of $S$ and denoted $F(S)$. The
successor of the Frobenius number is called the \emph{conductor} of $S$. It is
the least element of $S$ such that all the integers greater than it belong to
$S$ and is denoted by $c(S)$.  The least (positive) element of $S$, which is
also the minimum of the unique minimal set of generators, is called the
\emph{multiplicity} of $S$ and is denoted by $m(S)$.  The \emph{embedding
  dimension} of $S$ is the cardinality of the minimal generating set of $S$
and is denoted by $e(S)$.

The positive integers that do not belong to $S$ are called the \emph{gaps} of
$S$ and the number of such elements is called the \emph{genus} of $S$ and
denoted $g(S)$.
%%%%% -------------------------------------------------------------------%%%%%
\subsection{Ideals}
When studying numerical semigroups it is common to consider relative ideals
(see~\cite{BarucciDobbsFontana1997Book-Maximality}).  Given a numerical
semigroup $S$, a relative ideal $I_S$ of $S$ has a (unique) finite minimal
\emph{ideal generating system}, say $G$, such that $I_S=G+S^0$.  Since we are
interested in forming quotients, we are only interested in those relative
ideals that are contained in their ambient semigroups (and which are in fact
the semigroup ideals). As in~\cite{RosalesGarcia2009Book-Numerical}, these
are called \emph{ideals}. Notice that the ideals of a numerical semigroup $S$
are precisely those relative ideals whose minimal ideal generating system is
contained in $S$. Given a positive integer $k$, the set $I_k(S)=\{x\in S\mid
x\ge k\}$ is an ideal of $S$; we say that it is an \emph{ideal determined by
  the cutting point $k$}. When the ambient semigroup is understood, we write
$I$ (respectively $I_k)$ for $I_S$ (respectively $I_k(S)$).

Let $S$ be a numerical semigroup with multiplicity $m$ and conductor $c$. 
Observe that if $k\ge
c(S)$, then $I_k=\{x\in\N\mid x\ge k\}$. The minimal generating system of the numerical semigroup
$I_k$ is $\{k,k+1,\ldots,2k-1\}$, while the minimal ideal generating system
of $I_k(S)$ may be smaller. In fact, we have
$$
I_k(S)=\{k,k+1,\ldots,k+m-1\}+S^0.
$$
Moreover, when $k\ge c(S)$, we can easily conclude that $\{k,k+1,\ldots,k+m-1\}$ is the minimal ideal
generating system of $I_k$.

%%%%% -------------------------------------------------------------------%%%%%
\subsection{Rees quotients}\label{subsec:rees_quotients}
Given a numerical semigroup $S$ and an ideal $I_S$ we can form the Rees
quotient
$$\rqns(S,I_S)=S/I_S,$$
which is obtained from $S$ by identifying all elements of $I_S$ to a
distinguished element. This element is the \emph{zero} of the semigroup.

Throughout this paper we will call \emph{Rees quotient numerical semigroup}
(\rqns, for short) to any semigroup that is isomorphic to a Rees quotient of a
numerical semigroup by an ideal.

The \emph{zero} of a \rqns\ will be denoted by $\infty$.  The non-zero
elements of a \rqns\ are said to be \emph{finite} and the set of finite
elements is said to be the \emph{finite part} of the \rqns.

We denote by $\pv{RQNS}$ the class of all Rees quotient numerical semigroups.
%%%%% -------------------------------------------------------------------%%%%%
\subsection{Nilpotency}\label{subsec:nilpotency}
Let $S$ be a finite semigroup. Let $n$ be a positive integer. As usual, we
write $S^n$ for $\{s_1\cdots s_n\mid s_1,\ldots,s_n\in S\}$.  We say that $S$ is
\emph{nilpotent} if $|S^n|=1$, for some positive integer $n$. The least such
$n$ is called the \emph{nilpotency class} of the nilpotent semigroup $S$.  As
usual when dealing with finite semigroups, $x^{\omega}$ denotes the
idempotent that is a power of $x$, for any element $x$ of the
semigroup. The following fact is well known.
\begin{fact}
  Let $S$ be a semigroup with zero. The following conditions are equivalent:
  \begin{enumerate}[i)]
  \item $S$ is nilpotent;
  \item $S$ satisfies an equation of the form $x_1\cdots x_n=0$, for some
    $n\in\N$;
  \item $S$ satisfies an equation of the form $x^n=0$, for some $n\in\N$;
  \item $S$ satisfies the pseudoequation $x^{\omega}=0$.
  \end{enumerate}
\end{fact}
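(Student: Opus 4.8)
The plan is to prove the four conditions equivalent by establishing a cycle of implications, working with the zero element of $S$ throughout. Since $S$ is finite and has a zero, I will denote that zero by $0$ and exploit the fact that in a finite semigroup every element $x$ has an idempotent power $x^{\omega}$.

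First I would show $i)\Rightarrow ii)$: if $S$ is nilpotent with nilpotency class $n$, then $|S^n|=1$, and since $0\in S^n$ (as $0=0\cdot s_2\cdots s_n$), the unique element of $S^n$ must be $0$; hence $s_1\cdots s_n=0$ for all $s_1,\ldots,s_n\in S$, which is exactly the equation in $ii)$. Next, $ii)\Rightarrow iii)$ is trivial by substituting $x_1=\cdots=x_n=x$. For $iii)\Rightarrow iv)$: if $x^n=0$ identically, then for any $x$ take $m$ with $m\geq n$ and $x^m$ idempotent (such $m$ exists in a finite semigroup, e.g. a common multiple of the relevant index-plus-period data, or simply note $x^{\omega}=(x^{\omega})^k$ for all $k$ so $x^{\omega}=(x^{\omega})^n=\bigl(x^{\omega}\bigr)^{n}$; more cleanly, $x^{\omega}$ is a power $x^t$ with $t\geq 1$, and then $x^{\omega}=(x^{\omega})^n=x^{tn}=0$ since $tn\geq n$). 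This gives the pseudoequation $x^{\omega}=0$.

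The remaining implication $iv)\Rightarrow i)$ is the one that needs the finiteness of $S$ in an essential way, and I expect it to be the main (though still routine) obstacle. The pseudoidentity $x^{\omega}=0$ asserts pointwise that for each $x\in S$ the idempotent power $x^{\omega}$ equals $0$. Because $S$ is finite, there is a single exponent $N$ (for instance the least common multiple of all the eventual periods, or simply $N=|S|!$ or $N=|S|$ suitably chosen) such that $x^{N}$ is idempotent for every $x\in S$ simultaneously; hence $x^{N}=x^{\omega}=0$ for all $x\in S$. Thus $S$ satisfies the identity $x^{N}=0$. It then follows that any product of $N|S|$ elements of $S$ is $0$: in a product $s_1\cdots s_{N|S|}$, by pigeonhole some value among the partial products $s_1, s_1s_2, \ldots$ repeats, producing a factor of the form $uvu$ hence by iteration a large power that collapses to $0$ — more directly, one shows by a short induction (using that $S$ is finite and has a zero, so $S^{k+1}\subseteq S^k$ and the chain $S\supseteq S^2\supseteq\cdots$ stabilizes) that the stable value $S^{\infty}$ is an ideal which is also a subsemigroup all of whose elements $y$ satisfy $y=y^{N+1}\cdot(\text{stuff})$ forcing $y^N=0$ to lie in it, so $S^{\infty}=\{0\}$. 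Either way $|S^n|=1$ for $n$ large enough, so $S$ is nilpotent.

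The only genuinely delicate point is the passage from the \emph{pointwise} statement $x^{\omega}=0$ (a pseudoequation, interpreted element by element) to a \emph{uniform} identity $x^{N}=0$ valid with one fixed $N$; this is precisely where finiteness of $S$ is used and is the crux of $iv)\Rightarrow i)$. Once that uniformization is in hand, the stabilization of the descending chain of ideals $S\supseteq S^2\supseteq\cdots$, together with the observation that a nonzero stable value would contain a nonzero idempotent contradicting $x^{\omega}=0$, closes the loop. I would present the cycle $i)\Rightarrow ii)\Rightarrow iii)\Rightarrow iv)\Rightarrow i)$ in that order, spending most of the writing on the last implication and treating the first three as essentially immediate.
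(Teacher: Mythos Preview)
The paper does not prove this Fact; it is stated as ``well known'' and left without argument, so there is no paper proof to compare against. Your cycle $i)\Rightarrow ii)\Rightarrow iii)\Rightarrow iv)\Rightarrow i)$ is the standard route and is correct, and your reading of the ambient hypothesis that $S$ is finite is right: the paper defines nilpotency and $x^{\omega}$ only for finite semigroups.

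The first three implications are immediate, as you say. For $iv)\Rightarrow i)$ both of your sketches can be completed, but each needs a small cleanup. In the pigeonhole argument, a repeated partial product gives $u=uv$ (not ``a factor of the form $uvu$''), whence $u=uv^{N}=u\cdot 0=0$; so already any product of $|S|+1$ elements is $0$, and your bound $N|S|$ is more than needed. In the chain argument, the intermediate assertion ``$y=y^{N+1}\cdot(\text{stuff})$'' is not correct as written, though the conclusion you state afterward is: if $S^{k}=S^{k+1}$ then $S^{k}$ is a finite semigroup equal to its own square, and if its only idempotent were $0$ it would have to equal $\{0\}$. That last claim, however, is not entirely obvious and is itself most cleanly proved by the very same partial-product pigeonhole trick, so you may as well present that argument directly rather than routing through the descending chain.
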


%VHF
Notice that, being $z$ a (pseudo)word, the expression $z=0$ is just an abbreviation for the (pseudo)equations $z y=y z=z$, with $y$ a fixed variable not occurring in $z$.

The following is immediate.

\begin{remark}\label{rem:qns-finite-commutative}
  A \rqns\ is a nilpotent finite commutative semigroup. 
 Furthermore, if $S$ is a numerical semigroup with multiplicity $m$ and conductor
  $c$, then the nilpotency class of $\rqns(S,I_{c})$ is the least positive integer
  $k$ such that $km\ge c$.
\end{remark}

%%%%% -------------------------------------------------------------------%%%%%

\subsection{Some classes of finite semigroups}\label{subsec:classes}
A \emph{pseudovariety} of semigroups is a class of finite semigroups closed
under the formation of finite direct products, subsemigroups and homomorphic
images.

Among the classes of finite semigroups considered in this paper are the
classes $\pv{N}$ of nilpotent semigroups and $\pv{Com}$ of commutative
semigroups.  Remark~\ref{rem:qns-finite-commutative} tells us that
$\pv{RQNS}\subseteq\pv{N}\cap\pv{Com}.$

Other classes also considered here are subclasses of $\pv{RQNS}$, obtained by
considering ideals determined by cutting points. Specifically, we
consider $$\pv{CQNS}=\{S/I_k(S)\mid S \mbox{ is a numerical semigroup and
}k\in\NN\}\quad\mbox{ and } \quad\pv{C\NN}=\{\NN/I_k\mid k\in\NN\}.$$

A semigroup in $\pv{CQNS}$ will be referred as a \emph{quotient determined by
  the numerical semigroup $S$ and the cutting point $k$}.

As follows from the definitions we have
$\pv{C\NN}\subseteq\pv{CQNS}\subseteq\pv{RQNS}$ and therefore we have the
following chain:
\begin{equation}\label{eq:chain}
  \pv{C\NN}\subseteq\pv{CQNS}\subseteq\pv{RQNS}\subseteq\pv{N}\cap\pv{Com}.
\end{equation}

All the inclusions are strict, as we shall see in
Section~\ref{sec:examples_remarks}.  In Section~\ref{sec:pseudovarieties} we
prove that the pseudovariety generated by the smallest of these classes,
$\pv{C\NN}$, is $\pv{N}\cap\pv{Com}$, getting this way a nice set of
generators for the pseudovariety $\pv{N}\cap\pv{Com}$.  Before, in
Section~\ref{sec:presentations}, we give presentations for semigroups of
$\pv{RQNS}$.  Then, we devote Section~\ref{sec:open_problems} to open
problems. Among these problems are possible definitions corresponding to the
notable elements. 

%%% =====================================================================%%%%%
\section{Examples and simple remarks}\label{sec:examples_remarks}
When we want to represent a set of integers that contains all the integers
from, say, $a_n$ on, we use the notation $\{a_1,\ldots,a_n\all\}$ instead of
the more common $\{a_1,\ldots,a_n,\to\}$ mainly because it is more compact
and makes the table in Subsection~\ref{subsec:table} more readable.
Singleton sets will be usually represented by the single element they
contain. The exceptions occur when we want to stress out which are the
elements in an equivalence class.

The following just intends to give a first example that helps gaining
some intuition. We observe that the finite semigroup involved appears as
quotient of two different numerical semigroups.
\begin{example}\label{example:iso_quotients}
  Consider the following numerical semigroups, ideals and Rees quotients:
  \begin{enumerate}
  \item $S=\langle 2,5\rangle$, $I_S=\{6,7\}+S=\{6\all\}$ and
    $Q_1=\rqns(S,I_S)= \{\{2\},\{4\},\{5\},\infty\}$;
  \item $T=\langle 3,5\rangle$, $I_T=\{8,9,10\}+S=\{8\all\}$ and
    $Q_2=\rqns(T,I_T)= \{\{3\},\{5\},\{6\},\infty\}$.
  \end{enumerate}
  It is straightforward to observe that the function $\varphi:Q_1\to Q_2$
  defined by $\varphi(\{2\})=\{3\}, \varphi(\{4\})=\{6\},
  \varphi(\{5\})=\{5\}$ and $\varphi(\infty)=\infty$ is an isomorphism.
\end{example}

When a \rqns\ is defined by a numerical semigroup and a cutting point, there
exists a numerical semigroup such that the cutting point is the conductor of
the semigroup, as stated in the following remark.
\begin{remark}
  Let $S$ be a numerical semigroup and $k$ a positive integer. There exists a
  numerical semigroup $T$ such that $\rqns(S,I_k)=\rqns(T,T_{c(T)})$.
\end{remark}
\begin{proof}
  If $k\le c(S)$, take $T=S\cup\{n\in \NN\mid n \ge k\}$. If $k>c(S)$, take
  $T=2S\cup\{n\in \NN\mid n\ge 2k\}$.
\end{proof}

\begin{example}
  Let $S=\langle 3,5\rangle$ and take $k=10$ as cutting point. We get
  $S/I_k=\{3,5,6,8,9,\infty\}$.  One may take
  $T=\{6,10,12,16,18,20\all\}$. As $c(T)=20$, we get
  $T/I_{c(T)}=\{6,10,12,16,18,\infty\}$, which is isomorphic to $S/I_k$.
\end{example}

Notice that if $a,b,c$ are finite elements of a \rqns\ such that if $a+b=a+c$
is finite, then $b=c$. We state this property in the following proposition.
\begin{proposition}\label{prop:cancelation}
  The cancellation law holds in the finite part of a \rqns.
\end{proposition}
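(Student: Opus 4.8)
The plan is simply to unwind the definition of the multiplication on a \rqns{}. Since the cancellation property in the statement is clearly preserved by isomorphism, it suffices to treat a concrete Rees quotient $Q=\rqns(S,I_S)=S/I_S$, where $S$ is a numerical semigroup and $I_S$ is an ideal of $S$. Recall that the finite part of $Q$ is the set $S\setminus I_S$, and that for finite elements $a,b$ the product $a\oplus b$ equals the integer sum $a+b$ whenever $a+b\in S\setminus I_S$, and equals $\infty$ otherwise.

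Now I would argue as follows. Suppose $a,b,c$ are finite elements of $Q$ and that $a\oplus b=a\oplus c$ is finite. By the description of $\oplus$ just recalled, finiteness of $a\oplus b$ and of $a\oplus c$ forces $a+b\notin I_S$ and $a+c\notin I_S$, and moreover $a\oplus b=a+b$ and $a\oplus c=a+c$ as elements of $S$, that is, as non-negative integers. Hence the hypothesis $a\oplus b=a\oplus c$ becomes the equality $a+b=a+c$ in $\NN$, and cancelling $a$ in $\ZZ$ yields $b=c$, as required.

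There is essentially no obstacle here: the only point that needs a little care is not to conflate the semigroup operation $\oplus$ of $Q$ with ordinary integer addition — the two agree exactly on pairs whose sum avoids the ideal $I_S$, which is precisely the situation guaranteed by the assumption that $a\oplus b$ be finite. (It is also worth observing, to explain why the finiteness hypothesis cannot be dropped, that one may perfectly well have $a\oplus b=a\oplus c=\infty$ with $b\neq c$, so full cancellation fails in $Q$ itself.)
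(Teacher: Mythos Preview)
Your proof is correct and follows exactly the reasoning the paper has in mind: the paper merely observes, just before stating the proposition, that if $a+b=a+c$ is finite then $b=c$, and offers no further argument. Your write-up simply spells this out carefully by distinguishing $\oplus$ from integer addition and invoking cancellation in $\ZZ$.
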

This proposition is the main ingredient used in the examples of the following
subsection which show that there exist quotients of numerical semigroups
that are not Rees quotients by ideals, and that the direct product of Rees
quotient numerical semigroups is not necessarily a \rqns, respectively.
%%%%% -------------------------------------------------------------------%%%%%
\subsection{$\pv{RQNS}\ne\pv{N}\cap\pv{Com}$}\label{subsec:RQNSnonPV}
The following examples show that the class $\pv{RQNS}$ is not a
pseudovariety. In fact, it fails to be closed under the formation of
homomorphic images (Example~\ref{example:quotient_non_Rees}) and under the
formation of direct products (Example~\ref{example:direct_product_non_Rees}).
\begin{example}\label{example:quotient_non_Rees}
  Let $S=\rqns(\langle4,5\rangle,\{12\all\})=\{4,5,8,9,10,\infty\}$ and let
  $\theta$ be the congruence of $S$ generated by the pair $(9,10)$.  Then, it
  is easy to verify that
  $S/\theta=\{\{4\},\{5\},\{8\},\{9,10\},\{12\all\}\}$ ($\{12\all\}$ is a
  zero). We have that $\{4\}+\{5\}=\{9,10\}=\{5\}+\{5\}$, whence $S/\theta$
  does not satisfy the ``cancellation law''
  (Proposition~\ref{prop:cancelation}) and therefore is not the Rees quotient
  of a numerical semigroup.\\
  \begin{center}
    \begin{tikzpicture}[fill=gray!30]
      \path (1,4) node(a) [rectangle,draw] {$\,4\,$} (3,4) node(b)
      [rectangle,draw] {$\,5\,$} (0,2) node(aa) [rectangle,draw] {$\,8\,$}
      (2,2) node(ab) [rectangle,draw,fill] {$\,9\,$} (4,2) node(bb)
      [rectangle,draw,fill] {$\,10\,$} (2,0) node(z) [rectangle,draw]
      {$\,\infty^*\,$}; \path[-,black] (a) edge (ab); \path[-,black] (a) edge
      (aa); \path[-,black] (b) edge (ab); \path[-,black] (b) edge (bb);
      \path[-,black] (b) edge (ab); \path[-,black] (ab) edge (z);
      \path[-,black] (aa) edge (z); \path[-,black] (bb) edge (z);
    \end{tikzpicture}
    \hspace*{3cm}
    \begin{tikzpicture}[fill=green!30]
      \path (0,4) node(a) [rectangle,draw,fill] {$\{4\}$} (2,4) node(b)
      [rectangle,draw,fill] {$\{5\}$} (0,2) node(c) [rectangle,draw]
      {$\{8\}$} (2,2) node(ab) [rectangle,draw,fill] {$\{9,10\}$} (1,0)
      node(z) [rectangle,draw] {$\infty^*$}; \path[-,black] (a) edge (ab);
      \path[-,black] (a) edge (c); \path[-,black] (b) edge (ab);
      \path[-,black] (b) edge (ab); \path[-,black] (ab) edge (z);
      \path[-,black] (c) edge (z);
    \end{tikzpicture}
  \end{center}
\end{example}

\begin{example}\label{example:direct_product_non_Rees}
  Let $S=\rqns(\langle2,5\rangle,\{4\all\})=\{2,\infty\}$ and
  $T=\rqns(\langle2,7\rangle,\{6\all\})=\{2,4,\infty\}$. Then, the direct
  product
$$
S\times
T=\{(2,2),(\infty,2),(2,4),(\infty,4),(2,\infty),(\infty,\infty)=\infty\}
$$ 
is not isomorphic to a \rqns. In fact, we have
$(2,2)+(2,2)=(2,2)+(\infty,2)=(\infty,2)+(\infty,2)=(\infty,4)$, which can
not happen in a \rqns, due (again) to the ``cancellation law''
(Proposition~\ref{prop:cancelation}).

\begin{center}
  \begin{tikzpicture}[fill=green!30]
    \path (0,4) node(a) [rectangle,draw,fill] {$(2,2)$} (2,4) node(b)
    [rectangle,draw,fill] {$(\infty,2)$} (1,2) node(ab) [rectangle,draw,fill]
    {$(\infty,4)$} (3,2) node(c) [rectangle,draw] {$(2,4)$} (5,2) node(d)
    [rectangle,draw] {$(2,\infty)$} (3,0) node(z) [rectangle,draw]
    {$\infty^*$}; \path[-,black] (a) edge (ab); \path[-,black] (b) edge (ab);
    \path[-,black] (ab) edge (z); \path[-,black] (c) edge (z); \path[-,black]
    (d) edge (z);
  \end{tikzpicture}
\end{center}
\end{example}
\begin{corollary}\label{cor:RQNS_non_pseudovariety}
  The class \pv{RNQS} is not closed under neither the formation of
  homomorphic images nor the formation of direct products.
\end{corollary}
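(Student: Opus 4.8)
The plan is to simply assemble the two examples already exhibited, since each one provides exactly a counterexample to one of the two closure properties. First I would recall the structural obstruction isolated in Proposition~\ref{prop:cancelation}: the finite part of any \rqns\ satisfies the cancellation law, i.e.\ whenever $a+b=a+c$ is finite then $b=c$. Hence, to show that $\pv{RQNS}$ is not closed under some operation, it suffices to produce a semigroup obtained from members of $\pv{RQNS}$ by that operation whose finite part violates cancellation; such a semigroup cannot be (isomorphic to) a \rqns.

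For the failure of closure under homomorphic images, I would invoke Example~\ref{example:quotient_non_Rees}: there $S=\rqns(\langle4,5\rangle,\{12\all\})$ is a \rqns\ by construction, $\theta$ is a semigroup congruence on $S$, and in the quotient $S/\theta$ one has $\{4\}+\{5\}=\{9,10\}=\{5\}+\{5\}$ with $\{9,10\}$ finite while $\{4\}\neq\{5\}$. So $S/\theta$ is a homomorphic image of a member of $\pv{RQNS}$ that is not itself in $\pv{RQNS}$. For the failure of closure under direct products, I would invoke Example~\ref{example:direct_product_non_Rees}: with $S=\rqns(\langle2,5\rangle,\{4\all\})$ and $T=\rqns(\langle2,7\rangle,\{6\all\})$, both in $\pv{RQNS}$, the product $S\times T$ contains $(2,2)$, $(\infty,2)$ with $(2,2)+(2,2)=(\infty,2)+(\infty,2)=(\infty,4)$ finite but $(2,2)\neq(\infty,2)$, again contradicting Proposition~\ref{prop:cancelation}, so $S\times T\notin\pv{RQNS}$.

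There is no real obstacle here: the corollary is a direct bookkeeping consequence of the two worked examples together with the cancellation law. The only point requiring a word of care is that in Example~\ref{example:quotient_non_Rees} one must be sure that $\theta$ is genuinely a semigroup congruence (so that $S/\theta$ is a legitimate homomorphic image) and that the class $\{9,10\}$ really is a non-zero element of $S/\theta$ — i.e.\ that it is not collapsed into the zero $\{12\all\}$ — which is clear from the explicit description of $S/\theta$ given there. Granting these routine verifications, the statement follows, and I would phrase the proof as a one-line reference: ``Immediate from Examples~\ref{example:quotient_non_Rees} and~\ref{example:direct_product_non_Rees}.''
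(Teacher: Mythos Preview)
Your proposal is correct and is exactly the paper's approach: the corollary is stated immediately after Examples~\ref{example:quotient_non_Rees} and~\ref{example:direct_product_non_Rees} with no separate proof, since those examples (via Proposition~\ref{prop:cancelation}) are the proof. Your suggested one-line reference is precisely what the paper intends.
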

%%%%% -------------------------------------------------------------------%%%%%
\subsection{$\pv{CQNS}\ne\pv{RQNS}$}\label{subsec:CQNSneRQNS}

The following lemma, whose statement is inspired on Distler's
work~\cite{Distler2012tr-Finite} on the classification of finite nilpotent
semigroups, will allow us to prove that not every \rqns\ is determined by a
numerical semigroup and a cutting point.  Observe that for an integer $i$,
and a finite nilpotent semigroup $S$, the set $S^i$ consists of the elements
of $S$ that can be written as the product of at least $i$ minimal
generators. Therefore, $S^i\setminus S^{i+1}$ consists of the elements that
can be written as the product of exactly $i$ minimal generators.
\begin{lemma}\label{lemma:quotient_not_determined_by_cutting_point}
  Let $S$ be a commutative nilpotent semigroup
  % of size $6$
  minimally generated by $\{a,b\}$, such that $a^3=b^3=0$, $|S^2\setminus
  S^3|=2$ and $|S^3\setminus S^4|=1$. Then $S$ is not determined by a
  numerical semigroup and a cutting point.
\end{lemma}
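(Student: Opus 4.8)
The plan is to argue by contradiction. Suppose $S$ is isomorphic to a quotient $\rqns(T,I_k(T))$ for some numerical semigroup $T$ and some cutting point $k\in\NN$; I will show this is incompatible with the hypothesis $|S^3\setminus S^4|=1$.

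First I would unwind what being determined by a cutting point means here. The finite part of $\rqns(T,I_k(T))$ consists of the positive elements of $T$ that are smaller than $k$, and the product of two finite elements $x,y$ equals $x+y$ if $x+y<k$ and equals $\infty$ otherwise. Since $S$ has the two distinct generators $a$ and $b$, it is not the one-element semigroup, so this finite part is nonempty; hence the multiplicity $m=m(T)$, which is the least positive element of $T$, satisfies $m<k$. I would then check that the image of $m$ in $\rqns(T,I_k(T))$ is a minimal generator of $S$: it is a finite element, and it cannot be written as a product $x\cdot y$ of nonzero elements, because such a product is either $\infty$ or else corresponds to an integer $\ge 2m>m$. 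As the minimal generating set of $S$ is $\{a,b\}$, after possibly interchanging the names of $a$ and $b$ we may assume that $a$ is the image of $m$; the same observation shows incidentally that $\infty$ cannot be a generator.

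The crucial step is then to combine the hypothesis $a^3=0$ with the fact that every finite element of $S$ corresponds to an element of $T$ that is at least $m$. The relation $a^3=0$ says exactly that $3m\ge k$. Consequently a product of three finite elements of $S$ corresponds to a sum of three positive elements of $T$, which is an integer $\ge 3m\ge k$, so that product equals $\infty$; and any product of three elements of $S$ in which $\infty$ occurs is again $\infty$. Therefore $S^3=\{\infty\}$, hence $S^4=\{\infty\}$ as well, and $S^3\setminus S^4=\emptyset$. This contradicts $|S^3\setminus S^4|=1$, so no such pair $(T,k)$ exists.

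I expect the only slightly delicate point to be the bookkeeping of the second paragraph, namely making sure that one of the two prescribed generators really is the image of $m(T)$ and disposing of the degenerate case in which $\infty$ would be a generator. Once that is in place the contradiction is immediate; in fact it uses neither $b^3=0$ nor $|S^2\setminus S^3|=2$, so those hypotheses are presumably present only to pin down the isomorphism type of $S$ (so that such semigroups demonstrably occur), which one would confirm separately by an explicit example.
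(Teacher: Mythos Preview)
Your argument is correct and follows the same overall line as the paper's proof: assume $S\cong T/I_k$, identify one of the two minimal generators with the image of the multiplicity $m=m(T)$, use $a^3=0$ to get $3m\ge k$, and derive $S^3\setminus S^4=\emptyset$. The main difference is that your final step is more direct: you observe at once that every finite element of $T/I_k$ corresponds to an integer $\ge m$, so any triple product corresponds to an integer $\ge 3m\ge k$ and hence equals $\infty$, giving $S^3=\{\infty\}$ immediately. The paper instead first analyses $S^2\setminus S^3$ (using the hypothesis $|S^2\setminus S^3|=2$) to conclude $S^2\setminus S^3=\{a^2,ab\}$, and from there restricts $S^3\setminus S^4$ to $\{a^3,a^2b,ab^2\}$ before showing each is $0$. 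Your shortcut makes the hypothesis $|S^2\setminus S^3|=2$ genuinely superfluous for the contradiction, which is a small gain in economy.

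One minor correction to your closing commentary: you do use $b^3=0$, implicitly, when you write ``after possibly interchanging the names of $a$ and $b$''. That swap is only legitimate because the hypotheses are symmetric in $a$ and $b$; if only $a^3=0$ were assumed and the image of $m$ happened to be $b$, the argument would not go through. So the redundant hypothesis is $|S^2\setminus S^3|=2$, not $b^3=0$.
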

Prior to the proof we give an example
%(Example~\ref{ex:quotient_not_determined_by_cutting_point}) 
of a semigroup
fulfilling the conditions of the proposition. 

\begin{example}\label{ex:quotient_not_determined_by_cutting_point}
  Let us consider the numerical semigroup $N=\langle
  3,5\rangle=\{3,5,6,8\all\}$ and its ideal
  $I_N=\{6\}+N^0=\{6,9,11,12,14\all\}$. The quotient
  $S=N/I_N=\{3,5,8,10,13,\infty\}$ has the following multiplication table

$$\begin{array}{c||c|c|c|c|c|c|}
  \oplus &3&5&8&10&13&\infty\\
  \hline\hline
  3& \infty& 8& \infty& 13& \infty& \infty 
  \\ \hline
  5& 8& 10& 13& \infty& \infty& \infty 
  \\ \hline
  8& \infty& 13& \infty& \infty& \infty& \infty 
  \\ \hline
  10& 13& \infty& \infty& \infty& \infty& \infty 
  \\ \hline 
  13& \infty& \infty& \infty& \infty& \infty& \infty 
  \\ \hline
  \infty& \infty& \infty& \infty& \infty& \infty& \infty 
\end{array}
$$
and the following structure in Green's $\mathcal{D}$-classes\\
\begin{center}
  \begin{tikzpicture}[fill=gray!30]
    \path (0,3) node(a) [rectangle,draw] {$\,3\,$} (2,3) node(b)
    [rectangle,draw] {$\,5\,$} (0,2) node(ab) [rectangle,draw] {$\,8\,$}
    (2,2) node(bb) [rectangle,draw] {$\,10\,$} (1,1) node(abb)
    [rectangle,draw] {$\,13\,$} (1,0) node(z) [rectangle,draw]
    {$\,\infty^*\,$}; \path[-,black] (a) edge (ab); \path[-,black] (b) edge
    (ab); \path[-,black] (b) edge (bb); \path[-,black] (ab) edge (abb);
    \path[-,black] (bb) edge (abb); \path[-,black] (abb) edge (z);
  \end{tikzpicture}
\end{center}
\end{example}
Observe that the
constructed semigroup is a \rqns, which proves the following Corollary:
\begin{corollary}\label{cor:rqns_non_cqns}
  A \rqns\ is not necessarily determined by a numerical semigroup and a
  cutting point.
\end{corollary}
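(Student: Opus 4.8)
The plan is to argue by contradiction. Suppose $S$ is determined by a numerical semigroup $T$ and a cutting point $k$, so that $S\cong T/I_k(T)$, and derive a numerical impossibility.

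First I would isolate the structure of $T/I_k$. Its finite part is $\{x\in T\mid x<k\}$, and on finite elements the operation is truncated addition: $u\cdot v=u+v$ when $u+v<k$, and $u\cdot v=\infty$ otherwise. Next, I would check that the minimal generating set $(T/I_k)\setminus(T/I_k)^2$ of this finite nilpotent semigroup consists precisely of the minimal generators of $T$ lying below $k$: the zero $\infty$ lies in $S^2$, so is not a generator; $0\notin T$, since the non-trivial nilpotent semigroup $T/I_k\cong S$ has no identity; and for $z\in T$ with $z<k$, every factorization $z=u+v$ in $T$ has $u,v<z<k$, so $z$ is a product of two finite elements exactly when it is a sum of two elements of $T$. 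Since $S$ is minimally generated by the two elements $a,b$, it follows that $T$ has exactly two minimal generators below $k$, say $p<q$, and the isomorphism carries $\{a,b\}$ bijectively onto $\{p,q\}$.

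Second, I would locate a nonzero product of three generators. Because $a^3=b^3=0$, the only products of exactly three generators are $a^2b$ and $ab^2$; and since $|S^3\setminus S^4|=1>0$, some element of $S$ is a product of three generators but not of four, hence nonzero, so $a^2b\neq0$ or $ab^2\neq0$ (and if both are nonzero they coincide, a one-line check from $a^3=b^3=0$). As the hypotheses on $S$ are symmetric in $a$ and $b$, I may assume $a^2b\neq0$.

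Finally comes the numerical contradiction. Let $x_a,x_b$ be the images of $a,b$, so $\{x_a,x_b\}=\{p,q\}$. From $a^2b\neq0$ we get $2x_a+x_b<k$, whereas $a^3=0$ and $b^3=0$ give $3x_a\ge k$ and $3x_b\ge k$. If $x_a=p$ and $x_b=q$, then $3p\ge k>2p+q$, forcing $q<p$; if $x_a=q$ and $x_b=p$, then $3p\ge k>2q+p$, forcing $2p>2q$, again $q<p$. Either case contradicts $p<q$, which proves the lemma. I expect the only delicate point to be the first step --- confirming that $S$ being $2$-generated forces $T$ to have exactly two minimal generators below the cutting point, so that the entire finite part of $S$ is controlled by the two integers $p,q$ together with the single threshold $k$; the treatment of $0\in T$ and of the possible coincidence $a^2b=ab^2$ are minor nuisances. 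Once everything is reduced to these three numbers, the clash between ``$a^2b$ still fits below $k$'' and ``$a^3$ and $b^3$ already exceed $k$'' is forced by $p<q$ alone; in this approach the hypotheses $|S^2\setminus S^3|=2$ and $|S^3\setminus S^4|=1$ are used only to ensure that $S$ is $2$-generated and that a nonzero triple product exists.
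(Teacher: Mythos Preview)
What you have written is a correct proof of Lemma~\ref{lemma:quotient_not_determined_by_cutting_point}, not of Corollary~\ref{cor:rqns_non_cqns} as stated. The corollary is an existential claim (``there is a \rqns\ that is not determined by any cutting point''), and the paper discharges it in one line: Example~\ref{ex:quotient_not_determined_by_cutting_point} exhibits a concrete \rqns\ $S=\langle3,5\rangle/(\{6\}+\langle3,5\rangle^0)$ satisfying the hypotheses of the lemma, and the lemma then says $S\notin\pv{CQNS}$. Your proposal never names a specific \rqns\ nor checks that one with the listed properties exists, so on its own it does not prove the corollary; it supplies only the lemma half.

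As a proof of the lemma, your argument is correct and follows essentially the same route as the paper's, with a cosmetic reorganisation. The paper first chooses WLOG that the \emph{smaller} generator $n_1$ maps to $a$; then $a^3=0$ gives $3n_1\ge k$, and since $n_1<n_2$ every mixed triple $2n_1+n_2$, $n_1+2n_2$ exceeds $k$, forcing $S^3\setminus S^4=\emptyset$. You instead fix WLOG a nonzero triple product $a^2b$ and case-split on which of $p,q$ receives $a$; both cases collapse to $p>q$. The underlying inequality is identical. Your version is in one respect tidier: the paper detours through $|S^2\setminus S^3|=2$ to place $b^2\in S^3$ (hence $b^3\in S^4$), whereas you simply invoke the hypothesis $b^3=0$ directly, and you rightly note that $|S^2\setminus S^3|=2$ is then unused.

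One small quibble: the parenthetical ``if both are nonzero they coincide, a one-line check from $a^3=b^3=0$'' is neither obvious from those two relations alone nor needed---your argument only uses that \emph{at least one} of $a^2b,ab^2$ is nonzero.
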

Next we prove Lemma~\ref{lemma:quotient_not_determined_by_cutting_point}.
\begin{proof}
  Suppose that there exist a numerical semigroup $M$, a positive integer
  $k$ and an isomorphism $f:M/I_k\to S$. 
  Let
  $p:M\to M/I_k$ be the canonical projection, i.e. 
$$
p(m) = \left\{ \begin{array}{ll}
    \{m\}~,& m < k\\
    \infty~, &m\ge k,
  \end{array}\right.
$$
for all $m \in M$, and take the surjective homomorphism $g=f\circ p:M\to S$.  Clearly, being $\{n_1,n_2,\ldots, n_t\}$, with
$n_1<n_2<\cdots< n_t$, the minimal generators of $M$, we have $t\ge 2$
and $\{g(n_1),g(n_2)\}=\{a,b\}$.  Without loss of generality, we may admit 
that $g(n_1)=a$.  Now, as $S$ is commutative, we have  $S^2\setminus S^3\subseteq
\{a^2=g(2n_1),ab=g(n_1+n_2),b^2=g(2n_2)\}$. Moreover, since $2n_1<n_1+n_2<2n_2$ and
$|S^2\setminus S^3|=2$, we deduce that $S^2\setminus S^3=\{a^2, ab\}$.
Thus, again by the commutativity of $S$, we obtain  $S^3\setminus S^4\subseteq \{a^3=g(3n_1),a^2b=g(2n_1+n_2),ab^2=g(n_1+2n_1)\}$.  Now, as $a^3=0$, we get $k\le
3n_1<2n_1+n_2<n_1+2n_2$, whence $a^2b=ab^2=0$
and so $S^3\setminus S^4=\emptyset$, a
contradiction.
\end{proof}
%%%%% ===================================================================%%%%%

\section{Presentations}\label{sec:presentations}
We refer the reader to~\cite{Ruskuk2000phd-Semigroup} for presentations of
semigroups in general. Presentations of numerical semigroups may be found in
our general reference~\cite{RosalesGarcia2009Book-Numerical}.

Our aim is to find a nice presentation for a \rqns.  We use the presentation
obtained to give a practical method to compute the automorphism group of a
\rqns.

%%%%%===================================================================%%%%%
\subsection{Rees quotient semigroup presentations}
We start with some generalities.

Let $S$ be a semigroup without identity. 
As usual, we denote by $S^1$ the semigroup $S$ with an adjoined identity.   
An element $x\in S$ is called \textit{indecomposable in
$S$} if there are no elements $a,b\in S$ such that $x=ab$.
Let $I$ be a proper ideal of $S$. Then, clearly, 
an element $x\in S\setminus I$ is indecomposable in $S$ if and only if
$[x]_I=\{x\}$ is indecomposable in $S/I$.
On the other hand, if $X$ is a set of generators of $S$ then 
$\{[x]_I=\{x\}\mid x\in X\setminus I\}\cup\{I\}$ is a set of generators of $S/I$. 
Notice that, if $S\setminus I$ is not a subsemigroup of $S$, 
then $\{[x]_I=\{x\}\mid x\in X\setminus I\}$ generates $S/ I$ 
(thus $I$ does not need to be added as a generator).

Given a set $X$, we denote by $X^+$ the free semigroup on $X$.

Let $\langle X\mid R\rangle$ be a presentation of $S$. 
For each $y\in I$, denote by $w_y$ a (fixed) word of $X^+$
representing the element $y$. Let $Y$ be an \textit{ideal generating system} of
$I$, i.e. a non-empty subset $Y$ of $I$ such that $I=S^1YS^1$ (notice that any set of generators of $I$ is also an ideal generating system of $I$).  
Then, it is a routine matter to show: 

\begin{lemma}\label{prezero1}
The semigroup $S/I$ is defined by presentation with \textit{zero} 
$\langle X\mid R,~ w_y=0~(y\in Y)\rangle$. 
\end{lemma}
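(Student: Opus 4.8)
The plan is to exhibit the obvious canonical maps and verify that $\langle X\mid R,\ w_y=0\ (y\in Y)\rangle$ has the universal property of a presentation with zero of $S/I$. Since $\langle X\mid R\rangle$ presents $S$, there is a surjective homomorphism $\varphi\colon X^+\to S$ whose kernel congruence $\rho$ is the one generated by $R$. Composing with the Rees projection $p\colon S\to S/I$ gives a surjective homomorphism $\psi=p\circ\varphi\colon X^+\to S/I$, which I extend to the free semigroup with adjoined absorbing element $X^+\cup\{0\}$ (where $0w=w0=00=0$) by setting $\psi(0)=0_{S/I}$; it is onto because $\varphi(X)$ generates $S$. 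Let $\tau$ be the congruence on $X^+\cup\{0\}$ generated by $R$ together with the relations $w_y=0$ ($y\in Y$). The goal is an isomorphism $(X^+\cup\{0\})/\tau\cong S/I$ induced by $\psi$.

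First I would check $\tau\subseteq\ker\psi$, so that $\psi$ induces a surjective homomorphism of semigroups with zero $\bar\psi\colon(X^+\cup\{0\})/\tau\to S/I$. This is immediate: the relations in $R$ are respected because $S$ satisfies them and $p$ is a homomorphism, and for each $y\in Y$ one has $\psi(w_y)=p(\varphi(w_y))=p(y)=0_{S/I}$ since $y\in I$.

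The substantive step is injectivity of $\bar\psi$, i.e.\ that $\psi(u)=\psi(v)$ implies $u\mathrel{\tau}v$ for $u,v\in X^+\cup\{0\}$. If the common value is a nonzero element of $S/I$, then $u,v\in X^+$ and, because $p$ restricts to the identity on $S\setminus I$, we get $\varphi(u)=\varphi(v)$, hence $u\mathrel{\rho}v$ and so $u\mathrel{\tau}v$. If the common value is $0_{S/I}$, it suffices to prove $u\mathrel{\tau}0$ (and symmetrically for $v$). This is clear when $u=0$; otherwise $\varphi(u)\in I$, and here I would use that $Y$ is an ideal generating system: write $\varphi(u)=a\,y\,b$ with $y\in Y$ and $a,b\in S^1$, choose words $w_a,w_b\in(X^+)^1$ representing $a$ and $b$ (the empty word when the factor is $1$), so that $u\mathrel{\rho}w_aw_yw_b$, and then $w_aw_yw_b\mathrel{\tau}w_a\,0\,w_b=0$ using $w_y\mathrel{\tau}0$ together with absorption of $0$. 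Hence $u\mathrel{\tau}0$, $\bar\psi$ is injective, and thus an isomorphism.

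The only genuinely delicate point is this last passage from ``$\varphi(u)$ lies in the ideal $I$'' to ``$u$ is $\tau$-equivalent to $0$'', which is exactly what the hypothesis $I=S^1YS^1$ delivers, and it is also the reason $Y$ need only be an ideal generating system of $I$ rather than a set of semigroup generators. The same point can be packaged as a universal-property argument: any semigroup with zero $W$ equipped with an assignment $x\mapsto x_W$ satisfying $R$ and $w_y=0$ receives, via the presentation of $S$, a unique homomorphism $S\to W$; this homomorphism sends every $y\in Y$---and hence, by $I=S^1YS^1$, every element of $I$---to $0$, so it factors uniquely through $S/I$, which identifies $S/I$ with the semigroup presented by $\langle X\mid R,\ w_y=0\ (y\in Y)\rangle$.
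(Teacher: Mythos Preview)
Your proof is correct and is precisely the routine verification the paper has in mind; the paper in fact omits the proof entirely, stating only that ``it is a routine matter to show'' the lemma. Your argument---factoring $X^+_0\to S/I$ through the congruence $\tau$ and using $I=S^1YS^1$ to show that any word representing an element of $I$ is $\tau$-related to $0$---is exactly the expected check, and your alternative universal-property packaging is equally valid.
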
 

Recall that, in a presentation (of semigroups) $\langle A\mid \mathfrak{R}\rangle$ with zero, the {\it free semigroup with zero} $A^+_0$ (i.e. the free semigroup $A^+$ with a zero adjoined) plays the same role as $A^+$ in a usual presentation (of semigroups).   

Next, let $\langle X\mid R\rangle$ be a presentation of $S$ and let $Y$ be any subset of $I$ (not necessarily an ideal generating system) such that 
$$
\langle X\mid R,~ w_y=0~(y\in Y)\rangle
$$
is a presentation (with \textit{zero}) of $S/I$. We define from $R$ the following sets of relations on $(X\setminus I)^+_0$: 
$$
R_1=\{(u,v)\mid \mbox{$(u,v)\in R$ and $u,v\in (X\setminus I)^+$}\}, 
$$
$$
R_2=\{(u,0)\mid \mbox{$(u,v)\in R$ or $(v,u)\in R$, 
with $u\in (X\setminus I)^+$ and $v\in X^+\setminus (X\setminus I)^+$}\}
$$
and 
\begin{equation} \label{rels}
R'=R_1\cup R_2. 
\end{equation}
Let $Y'=\{y\in Y\mid w_y\in (X\setminus I)^+\}$ (in
addition, we may assume that, 
for each element $y\in I\cap X$, we have taken $w_y$ as the word $y$ and so, 
in this case, $Y'\cap X =\emptyset$). Under these conditions, we have: 
\begin{lemma}\label{prezero2}
The semigroup $S/I$ is defined by presentation 
$\langle X\setminus I\mid R',~ w_y=0~(y\in Y')\rangle$. 
\end{lemma}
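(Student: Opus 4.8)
The plan is to start from the presentation $\langle X\mid R,\ w_y=0\ (y\in Y)\rangle$ with zero of $S/I$ given by hypothesis, and to show that it is equivalent (defines the same semigroup with zero) to the presentation $\langle X\setminus I\mid R',\ w_y=0\ (y\in Y')\rangle$ obtained by deleting the generators lying in $I$. The heuristic is that every generator in $X\cap I$ already represents the zero of $S/I$, so any relator word containing a letter from $X\cap I$ represents the zero; thus, removing such letters from the alphabet loses no information provided we record, via the relations $R_2$ and $w_y=0\ (y\in Y\setminus Y')$, that the corresponding words over $(X\setminus I)^+$ are also zero. First I would fix notation: write $Z=X\setminus I$, let $\phi:Z^+_0\to S/I$ be the restriction of the canonical map $X^+_0\to S/I$, and observe that $\phi$ is onto by the earlier remark that $\{[x]_I\mid x\in X\setminus I\}$ together with the zero (which here equals some $[x]_I$ with $x\in X\cap I$, or is forced as a product) generates $S/I$; more precisely $\phi$ is onto because every element of $S/I$ is either $0$ or $[s]_I$ for some $s\in S\setminus I$, and such $s$ is a product of generators in $X$, and if any factor lies in $I$ then $[s]_I=0$, so in fact $[s]_I$ is a product of classes of generators from $Z$.

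The core is a congruence comparison on $Z^+_0$. Let $\tau$ be the congruence on $Z^+_0$ generated by $R'\cup\{(w_y,0)\mid y\in Y'\}$, so that $Z^+_0/\tau$ is the semigroup presented by the claimed presentation; I must show $\tau=\ker\phi$. The inclusion $\tau\subseteq\ker\phi$ is the routine direction: each pair in $R_1$ holds in $S/I$ because it already holds in $S$ (hence in $S/I$) and only involves letters of $Z$; each pair $(u,0)\in R_2$ holds because its partner $v$ in $X^+\setminus Z^+$ contains a letter of $X\cap I$, so $v$ (and hence $u$, which equals $v$ in $S/I$) maps to $0$; and each $(w_y,0)$ with $y\in Y'$ holds by the defining relations of the original presentation. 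The harder inclusion $\ker\phi\subseteq\tau$ is where the real work lies: given $u,v\in Z^+_0$ with $\phi(u)=\phi(v)$, I would lift to the original presentation, where $u\sim v$ modulo the congruence generated by $R\cup\{(w_y,0)\mid y\in Y\}$ on $X^+_0$, take a derivation $u=u_0\to u_1\to\cdots\to u_n=v$ applying one relation at a time, and then push this derivation down to $Z^+_0$. Each intermediate word $u_i\in X^+_0$ either lies in $Z^+_0$ already or contains a letter of $X\cap I$; in the latter case it represents $0$ in $S/I$, and I would argue that the block of the derivation passing through such "zero words" can be replaced by a single step to $0$ and back, using the relations in $R_2$ and the relations $w_y=0\ (y\in Y')$ to get from the last all-$Z$ word into $0$ and then out again. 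Handling the case analysis on which relation of $R$ (respectively which $w_y=0$) is applied at each step, and checking that an application of a relation from $R$ whose two sides straddle $Z^+$ and $X^+\setminus Z^+$ is exactly captured by a pair in $R_2$, is the main obstacle and the place where one must be careful about the distinction between $Y$ and $Y'$ and about the convention (fixed in the paragraph preceding the lemma) that $w_y=y$ for $y\in I\cap X$.

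Once $\ker\phi=\tau$ is established, the conclusion is immediate: $Z^+_0/\tau=Z^+_0/\ker\phi\cong S/I$ via $\phi$, so $\langle X\setminus I\mid R',\ w_y=0\ (y\in Y')\rangle$ is a presentation of $S/I$, as claimed. I would also remark that, by the parenthetical normalization, the words $w_y$ for $y\in I\cap X$ contribute nothing new (they become the empty condition on $Z$), which is why only $y\in Y'$ survive; and that this lemma is precisely the tool needed in the next subsection to pass from the standard numerical-semigroup presentation over all minimal generators to a presentation of the \rqns{} over just the generators that remain finite.
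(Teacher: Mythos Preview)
Your proposal is correct and follows essentially the same route as the paper. Both arguments verify the easy direction (the relations hold in $S/I$) and then, for the converse, analyse a derivation over $X^+_0$ and use $R_2$ precisely at the step where the derivation first crosses from $(X\setminus I)^+$ into $X^+\setminus(X\setminus I)^+$. The only organisational difference is that the paper first splits on the \emph{value}: if $w,w'\in(X\setminus I)^+$ represent an element of $S\setminus I$, then an $R$-derivation exists and automatically stays in $(X\setminus I)^+$ (all intermediate words represent that same nonzero element); if they represent $0$, the paper shows $w=0$ and $w'=0$ separately by truncating a derivation to $0$ at its first exit from $(X\setminus I)^+$. Your version instead keeps a single derivation $u\to\cdots\to v$ and replaces each maximal block of words outside $(X\setminus I)^+$ by a detour through $0$. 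That works, but the paper's split avoids having to argue about re-entering $(X\setminus I)^+$ from a zero block, which is the part of your case analysis that would need the most care.
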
 
\begin{proof}
First, observe that, clearly, all the relations of 
$\langle X\setminus I\mid R',~ w_y=0~(y\in Y')\rangle$ are satisfied by $S/I$. 
Therefore, it remains to prove that all the equalities, 
between words of $(X\setminus I)^+$, satisfied by $S/I$ are consequences of 
$R'\cup\{w_y=0\mid y\in Y'\}$. 

Let $w,w'\in (X\setminus I)^+$ be such that the equality $w=w'$ is satisfied
by $S/I$.  Then $w$ and $w'$ both represent elements of $I$ or $w=w'$ is
satisfied by $S$ (and, in this case, we may suppose that $w$ and $w'$
represent an element of $S\setminus I$).

If this last case occurs, since $S$ is presented by $\langle X\mid R\rangle$,
then there exists a finite sequence of elementary $R$-transitions $w\to
w_1\to\cdots\to w_{n-1}\to w'$ over $X^+$.  As each word of this sequence
represents the same element of $S\setminus I$, then all the letters involved
must belong to $X\setminus I$. Hence $w\to w_1\to\cdots\to w_{n-1}\to w'$ is
also a finite sequence of elementary $R_1$-transitions over $(X\setminus
I)^+$ and so, in particular, $w=w'$ is a consequence of $R'\cup\{w_y=0\mid
y\in Y'\}$.

Next, suppose that $w$ represents an element of $I$.  Since $\langle X\mid
R,~ w_y=0~(y\in Y)\rangle$ is a presentation of $S/I$ and the equality $w=0$
is satisfied by $S/I$, then there exist a finite sequence of elementary
$R$-transitions $w\to w_1\to\cdots\to w_{n-1}$ over $X^+$ and an elementary
$\{w_y=0\mid y\in Y\}$-transition $w_{n-1}\to 0$ over $X^+$.  Now, we
consider two possibilities.  First, if $w_1,\ldots,w_{n-1}\in (X\setminus
I)^+$, then $w\to w_1\to\cdots\to w_{n-1}$ is also a finite sequence of
elementary $R_1$-transitions over $(X\setminus I)^+$ (notice that, we have
taken $w\in (X\setminus I)^+$) and $w_{n-1}\to 0$ is an elementary
$\{w_y=0\mid y\in Y'\}$-transition over $(X\setminus I)^+$, whence $w=0$ is a
consequence of $R'\cup\{w_y=0\mid y\in Y'\}$.  Secondly, we suppose that
there exists an index $i\in\{1,\ldots,n-1\}$ such that $w_i\in
X^+\setminus(X\setminus I)^+$ and take the smallest of such indexes.  Thus
$w\to w_1\to\cdots\to w_{i-1}$ (where $w_{i-1}$ denotes the word $w$, for
$i=1$) is a finite sequence of elementary $R_1$-transitions over $(X\setminus
I)^+$ and $w_{i-1}\to w_i$ is an elementary $\{(u,v)\}$-transition, for some
$u\in (X\setminus I)^+$ and $v\in X^+\setminus (X\setminus I)^+$ such that
$(u,v)\in R$ or $(v,u)\in R$. Hence $w\to w_1\to\cdots\to w_{i-1}\to 0$ is a
finite sequence of elementary $R_1\cup R_2$-transitions over $(X\setminus
I)^+$ and so again we conclude that $w=0$ is a consequence of
$R'\cup\{w_y=0\mid y\in Y'\}$.

Therefore, if $w$ and $w'$ both represent elements of $I$, from the last
paragraph we deduce that $w=w'$ is a consequence of $R'\cup\{w_y=0\mid y\in
Y'\}$, as required.
\end{proof}
%%%%%===================================================================%%%%%
\subsection{Presentations of \rqns's}
Now, let $S$ be a numerical semigroup and let $I$ be a proper ideal of
$S$. Take an ideal generating system $G$ of $I$ (whence $I=G+S^0$) and a
presentation $\langle X\mid R\rangle$ of $S$. Then, as a particular case of
Lemma \ref{prezero1}, we have:

\begin{corollary}\label{prop:qnspre}
  The \rqns~$S/I$ is defined by the presentation $\langle X\mid R,~
  w_g=0~(g\in G)\rangle$.
\end{corollary}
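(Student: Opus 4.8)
The plan is to recognize the statement as nothing more than the special case $Y=G$ of Lemma~\ref{prezero1}, so there is essentially no new mathematical content to produce; what remains is to verify that a numerical semigroup $S$, a proper ideal $I$, and an ideal generating system $G$ of $I$ do meet the hypotheses under which that lemma was stated. I would therefore organize the argument as a short chain of routine checks followed by a direct invocation of the lemma.

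First I would note that, under the convention fixed in Subsection~\ref{subsec:numerical_semigroups}, a numerical semigroup $S$ does not contain $0$ and hence has no identity, which is precisely the standing assumption on $S$ in the discussion preceding Lemma~\ref{prezero1}; moreover $I$ is assumed proper, as that lemma requires. The one step that genuinely deserves a sentence is checking that the set $G$ introduced here — called an ``ideal generating system of $I$'' and characterized by $I=G+S^0$ — is an ideal generating system in the sense of Lemma~\ref{prezero1}, i.e.\ a non-empty $Y\subseteq I$ with $I=S^1YS^1$. This is just a translation of notation: numerical semigroups are written additively and $S^0$ plays the role of $S^1$, so $I=G+S^0$ reads $I=S^1G=S^1GS^1$, the last equality holding by commutativity, and $G\neq\emptyset$ because $I\neq\emptyset$. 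Having matched the hypotheses, I would then apply Lemma~\ref{prezero1} with $Y=G$ to obtain that $\langle X\mid R,\ w_g=0\ (g\in G)\rangle$ is a presentation with zero of $S/I$, and finally observe that the zero of this presentation corresponds, by the construction underlying the lemma, to the class $[y]_I$ for $y\in I$, which is exactly the distinguished element $\infty$ of the \rqns~$S/I$; thus this is the asserted presentation.

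I do not expect any real obstacle: the whole substance lies in Lemma~\ref{prezero1}, and the corollary is merely its reformulation in the vocabulary of numerical semigroups and their ideals. The only points to be careful about are the notational dictionary $+\leftrightarrow\cdot$ and $S^0\leftrightarrow S^1$, together with the already-recorded remark that any set of generators of $I$ is a fortiori an ideal generating system of $I$, so that the finite minimal ideal generating system of $I$ (or indeed any generating set) may be used as $G$.
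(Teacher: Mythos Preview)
Your proposal is correct and matches the paper's approach exactly: the paper states the corollary immediately after the sentence ``Then, as a particular case of Lemma~\ref{prezero1}, we have:'' and gives no further argument. Your verification that $G$ is an ideal generating system in the sense required by Lemma~\ref{prezero1} (via the additive/multiplicative dictionary $I=G+S^0 \leftrightarrow I=S^1GS^1$) is a nice explicit unpacking of what the paper leaves implicit.
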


\begin{example}
Let $m=m(S)$ and let $k\ge c(S)$ be an integer. Recall that
$\{k,k+1,\ldots,k+m-1\}$ is the minimal ideal generating system of the ideal $I_k$ of $S$. 
Thus 
$$
\langle X\mid R,~w_k=w_{k+1}=\cdots=w_{k+m-1}=0\rangle
$$
is a presentation of $S/I_k$.
\end{example}

Next, we suppose that $X$ is the minimal generating system of $S$.  Then (the
set of classes of the elements of) $X\setminus I$ is a set of generators of
$S/I$ (since $S\setminus I$ is never a subsemigroup of $S$). Moreover, as
$X\setminus I\subseteq S/I$ is also a set of indecomposable elements of
$S/I$, then $X\setminus I$ is a minimum set (for set inclusion) of generators
of $S\setminus I$. We call to $X\setminus I$ the \textit{minimal generating
  system} of the \rqns~$S/I$.

In particular, if the ideal $I$ does not contain any element of the minimal
generating system $X$ of $S$, then $X$ is also the minimal generating system
of $S/I$. In this case, the presentation of $S/I$ given by 
Corollary~\ref{prop:qnspre} (considering there $X$ as being the minimal
generating system of $S$) is already over its minimal generating system.

In what follows, we aim to determine a presentation over the minimal
generating system of any \rqns.

Let $G$ be the minimal ideal generating system of $I$. Recall that, 
for each $g\in G$, we denote by $w_g$ a fixed word of $X^+$ 
representing the element $g$. Then, we have:

\begin{lemma}\label{lem:qnspremgs}
For any $g\in G$, $w_g\in (X\setminus I)^+$ if and only if $g\in G\setminus X$.
\end{lemma}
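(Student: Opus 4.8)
The plan is to unwind the definitions and use the fact that $X$ is the minimal generating system of the numerical semigroup $S$, so that the indecomposable elements of $S$ are exactly the elements of $X$. Recall that $G$ is the minimal ideal generating system of the ideal $I$, and for each $g\in G$ we have fixed a word $w_g\in X^+$ representing $g$; since $X$ generates $S$ as a semigroup, $w_g$ is a product of elements of $X$, i.e.\ $g$ is a sum of minimal generators of $S$.

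First I would prove the easy direction: if $g\in X$, then (after the convention recalled just before the statement, that $w_g$ is taken to be the one-letter word $g$) the word $w_g=g$ involves the letter $g\in I$, so $w_g\notin (X\setminus I)^+$; contrapositively, $w_g\in(X\setminus I)^+$ forces $g\notin X$, that is $g\in G\setminus X$. For the converse, suppose $g\in G\setminus X$. Then $g$ is not a minimal generator of $S$, hence $g$ is decomposable in $S$: write $g=x_1+\cdots+x_r$ with $r\ge 2$ and each $x_i\in X$. The key point is that \emph{none} of the $x_i$ can lie in $I$: if some $x_i\in I$, then, since $I$ is an ideal of $S$ and $g-x_i=\sum_{j\neq i}x_j\in S^0$, we would get $g\in x_i+S^0\subseteq I$, and moreover $g$ would be a ``non-minimal'' element of $I$ with respect to the ideal generating, contradicting that $g$ belongs to the \emph{minimal} ideal generating system $G$ (a minimal ideal generator cannot be written as another element of $I$ plus something in $S^0$ unless that element is $g$ itself, which is impossible here as $r\ge 2$ and all $x_i>0$). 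Hence every $x_i\in X\setminus I$, so the word $x_1\cdots x_r$ lies in $(X\setminus I)^+$ and represents $g$; taking this as our fixed choice of $w_g$ (or observing that any representing word of $g$ over $X$ must use only letters outside $I$ by the same ideal argument applied to each letter) gives $w_g\in(X\setminus I)^+$.

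The main obstacle, and the step deserving the most care, is the claim that no letter of a word representing $g\in G\setminus X$ can belong to $I$: this is precisely where the \emph{minimality} of the ideal generating system $G$ is used, not merely that $I=G+S^0$. I would argue it cleanly as follows: if $w_g$ contained a letter $x\in X\cap I$, then reading $w_g$ in $S$ shows $g=x+s$ for some $s\in S^0$; if $s\neq 0$ this says $g$ lies in the ideal generated by $x\in I$ but $g\ne x$, so removing $g$ from $G$ and (if necessary) adding a generator for $x$ would contradict minimality of $G$ — more precisely, $g\in I\setminus\bigl((I\setminus\{g\})+S^0\bigr)$ fails, contradicting that $g$ is a minimal ideal generator; if $s=0$ then $g=x\in X$, contradicting $g\in G\setminus X$. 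Once this is established, both implications follow, and the lemma is proved. I would keep the write-up short, since modulo this observation everything is a direct unwinding of the definitions of indecomposability, of $(X\setminus I)^+$, and of minimal ideal generating system.
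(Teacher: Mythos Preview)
Your proposal is correct and follows essentially the same approach as the paper: both hinge on the observation that if the fixed word $w_g$ contains a letter $x\in X\cap I$, then $g=x+s$ with $s\in S^0$, and the minimality of $G$ forces $s=0$, i.e.\ $g=x\in X$. The only cosmetic differences are that the paper argues the non-trivial direction by contrapositive (starting from $w_g\notin(X\setminus I)^+$) and first notes that such a letter $x$ must itself lie in $G$ before invoking minimality, whereas you argue directly from $g\in G\setminus X$ and appeal to the characterization $G=I\setminus(I+S)$; your slightly informal phrase about ``adding a generator for $x$'' is unnecessary and best replaced by the cleaner ``more precisely'' clause you already give.
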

\begin{proof}
If $g\in X$ then $g$ is indecomposable, whence $w_g$ should be the word $g$ 
(despite, in any case, we could have chosen $w_g$ equal $g$) and so 
$w_g\not\in (X\setminus I)^+$. 

Conversely, suppose that $w_g\not\in (X\setminus I)^+$. Then
$g=x_1+\cdots+x_n$, with $x_1,\ldots,x_n\in X$ ($n\ge1$) and $x_i\in I$, for
some $1\le i\le n$.  Then, as $x_i$ is indecomposable and $x_i\in G+S^0$, we
must have $x_i\in G$ (since $x_i=x_i+0$ is the unique decomposition
permitted).  Now, if $g\neq x_i$, then $I=G+S^0\subseteq
(G\setminus\{g\})+S^0\subseteq I$ (observe that, given $x\in S^0$, we have
$g+x=x_i+(x_1+\cdots+x_{i-1}+x_{i+1}+\cdots+x_n+x)\in(G\setminus\{g\})+S^0$),
which contradicts the minimality of $G$. Thus $g=x_i\in X$ and so the lemma
is proved.
\end{proof}

Let $R'$ be the set of relations over $(X\setminus I)^+_0$ obtained
from $R$ as defined in general in (\ref{rels}). 
Thus, combining Lemma \ref{prezero2} with Lemma \ref{lem:qnspremgs}, we immediately have:

\begin{theorem}\label{prop:qnspremgs}
  The \rqns~$S/I$ is defined by the presentation $\langle X\setminus I\mid
  R',~ w_g=0~(g\in G\setminus X)\rangle$ over its minimal generating system.
\end{theorem}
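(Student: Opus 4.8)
\textbf{Proof plan for Theorem~\ref{prop:qnspremgs}.}

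The plan is to obtain this result as a direct specialization of the general machinery already set up, namely Lemma~\ref{prezero2}, applied to the situation where $S$ is a numerical semigroup, $I$ a proper ideal, and $X$ the minimal generating system of $S$. First I would verify that the hypotheses of Lemma~\ref{prezero2} are met. By Corollary~\ref{prop:qnspre}, the presentation $\langle X\mid R,~ w_g=0~(g\in G)\rangle$ (where $G$ is the minimal ideal generating system of $I$, so $I=G+S^0$) defines $S/I$; this gives us the starting presentation with zero demanded by Lemma~\ref{prezero2}, taking $Y=G$ there. Hence Lemma~\ref{prezero2} tells us that $S/I$ is defined by $\langle X\setminus I\mid R',~ w_y=0~(y\in Y')\rangle$, where $R'$ is built from $R$ exactly as in~(\ref{rels}) and $Y'=\{y\in G\mid w_y\in(X\setminus I)^+\}$.

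The only gap between this and the statement of the theorem is the identification $Y'=G\setminus X$. This is precisely the content of Lemma~\ref{lem:qnspremgs}: for $g\in G$ we have $w_g\in(X\setminus I)^+$ if and only if $g\in G\setminus X$. (Recall the convention, already fixed before Lemma~\ref{prezero2}, that for each $y\in I\cap X$ the representing word $w_y$ is taken to be the one-letter word $y$ itself, which is consistent with the indecomposability argument in the proof of Lemma~\ref{lem:qnspremgs}; this ensures $Y'\cap X=\emptyset$ and that no bookkeeping ambiguity arises.) Substituting $Y'=G\setminus X$ into the presentation furnished by Lemma~\ref{prezero2} yields exactly $\langle X\setminus I\mid R',~ w_g=0~(g\in G\setminus X)\rangle$.

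Finally I would note that this presentation is indeed over the \emph{minimal generating system} of the \rqns\ $S/I$ in the sense defined just above the theorem: the paragraph preceding the statement established that $X\setminus I$ is a minimum set of generators of $S/I$, since $S\setminus I$ is never a subsemigroup of $S$ and the elements of $X\setminus I$ are indecomposable in $S/I$. So the generating set appearing in the presentation is the correct one, and nothing further needs to be checked. There is essentially no obstacle here: all the substantive work—the two transition-sequence arguments distinguishing whether intermediate words leave $(X\setminus I)^+$, and the minimality argument pinning down which ideal generators are indecomposable—has already been carried out in Lemma~\ref{prezero2} and Lemma~\ref{lem:qnspremgs}, so the theorem is a one-line combination of the two, which is exactly why the excerpt says ``we immediately have''.
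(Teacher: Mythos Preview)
Your proposal is correct and matches the paper's own argument exactly: the paper simply writes ``combining Lemma~\ref{prezero2} with Lemma~\ref{lem:qnspremgs}, we immediately have'', and your plan spells out precisely this combination (using Corollary~\ref{prop:qnspre} to supply the input presentation with $Y=G$, then Lemma~\ref{lem:qnspremgs} to identify $Y'=G\setminus X$). There is nothing to add.
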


%%%%%===================================================================%%%%%
\subsection{Isomorphisms of \rqns's}
In what follows, all Rees quotient numerical semigroups considered are
constructed from proper ideals.

Let $S_1$ and $S_2$ be two Rees quotient numerical semigroups with minimal
generating systems $X_1$ and $X_2$, respectively.

\begin{lemma}
  If $\varphi:S_1\longrightarrow S_2$ is a surjective homomorphism, then
  $X_2\subseteq \varphi(X_1)$. In particular $|X_2|\le|X_1|$.
\end{lemma}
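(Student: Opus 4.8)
The plan is to show that every element of the minimal generating system $X_2$ of $S_2$ must lie in the image of $X_1$ under $\varphi$, using the characterisation of $X_2$ as the set of indecomposable finite elements of $S_2$.

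First I would recall, from the discussion preceding Theorem~\ref{prop:qnspremgs}, that $X_2$ consists precisely of the indecomposable finite elements of $S_2$, and similarly $S_1$ is generated (as a semigroup with zero) by $X_1\cup\{\infty\}$; in fact every finite element of $S_1$ is a sum of elements of $X_1$. Now take an arbitrary $x\in X_2$. Since $\varphi$ is surjective, there is some $s\in S_1$ with $\varphi(s)=x$; as $x\ne\infty$ and $\varphi(\infty)=\infty$ (a homomorphism of semigroups with zero must send zero to zero, the zero being the unique element absorbing everything), $s$ must be a finite element of $S_1$. Write $s=y_1+\cdots+y_n$ with $y_1,\dots,y_n\in X_1$. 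Then $x=\varphi(s)=\varphi(y_1)+\cdots+\varphi(y_n)$ in $S_2$.

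Next I would exploit indecomposability of $x$. If $n\ge 2$, then $x=\varphi(y_1)+(\varphi(y_2)+\cdots+\varphi(y_n))$ expresses $x$ as a product (sum) of two elements of $S_2$; since $x$ is indecomposable in $S_2$, this forces one of the two factors to be... well, the only way a sum can equal an indecomposable element is if that sum is not a genuine decomposition, which for a semigroup with zero can only happen if $x=\infty$ — contradiction, since $x$ is finite and finite elements sum to a genuine decomposition unless the sum is $\infty$. Hence $n=1$, i.e. $s=y_1\in X_1$ and $x=\varphi(y_1)\in\varphi(X_1)$. This shows $X_2\subseteq\varphi(X_1)$, and the cardinality bound $|X_2|\le|\varphi(X_1)|\le|X_1|$ follows immediately.

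The one point that needs care — and is the main (minor) obstacle — is the claim that a sum $a+b$ of finite elements in a \rqns\ is always a ``genuine'' decomposition in the sense that if $a+b$ is finite then it is indeed decomposable, while if $a+b=\infty$ it cannot equal our finite indecomposable $x$. I would make this precise by noting: in $S_2$, for finite $a,b$, either $a+b=\infty$ or $a+b$ is finite and then by definition $a+b$ is decomposable; since $x$ is finite and indecomposable, neither case allows $x=a+b$ with $a,b$ finite. Applying this with $a=\varphi(y_1)$, $b=\varphi(y_2)+\cdots+\varphi(y_n)$ (both finite, since they are sums of the finite elements $\varphi(y_i)\in X_2\subseteq S_2$ — note $\varphi$ maps generators to non-zero elements as just argued, or more simply, if some $\varphi(y_i)=\infty$ then $x=\infty$, absurd) rules out $n\ge 2$, completing the argument.
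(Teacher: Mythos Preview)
Your argument is correct and is essentially an unpacked version of the paper's two-line proof, which simply notes that $\varphi(X_1)$ generates $S_2$ (since $X_1$ generates $S_1$ and $\varphi$ is onto) and then invokes the fact that $X_2$, being the set of indecomposable elements, is the \emph{minimum} generating set for inclusion, hence $X_2\subseteq\varphi(X_1)$.

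Your final paragraph's worry about ``genuine'' decompositions is unnecessary and slightly muddles an otherwise clean argument: by the paper's definition, $x\in S_2$ is indecomposable precisely when $x\ne a+b$ for \emph{any} $a,b\in S_2$ (finite or not), so writing $x=\varphi(y_1)+(\varphi(y_2)+\cdots+\varphi(y_n))$ with $n\ge2$ is already a contradiction, with no need to track whether the summands are finite.
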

\begin{proof}
  Since $\varphi$ is surjective, then $\varphi(X_1)$ generates $S_2$. Thus
  $X_2\subseteq \varphi(X_1)$, by the minimality of $X_2$. Moreover,
  $|X_2|\le|\varphi(X_1)|\le|X_1|$, as required.
\end{proof}

It follows immediately that:

\begin{proposition}\label{prop:isomgs}
  Let $\varphi:S_1\longrightarrow S_2$ be an isomorphism. Then
  $\varphi(X_1)=X_2$. In particular $|X_1|=|X_2|$.
\end{proposition}
\medskip

Let $S$ and $T$ be (any) two semigroups and let $\langle X\mid R\rangle$ be a
presentation of $S$.

Let $f:X\longrightarrow T$ be a mapping and let $\phi:X^+\longrightarrow T$
be the (unique) homomorphism extending $f$ (regarding $X$ as a set of
letters). If $f$ \textit{satisfies} $R$, i.e. $\phi(u)=\phi(v)$, for all
$(u,v)\in R$, then the mapping $\varphi:S\longrightarrow T$ defined by
$\varphi(s)=\phi(w_s)$, where $w_s$ is any (fixed) word of $X^+$ representing
$s$, for all $s\in S$, is the unique homomorphism extending $f$ (regarding
$X$ as a generating set of $S$). Moreover, $f(X)$ generates $T$ if and only
if $\varphi:S\longrightarrow T$ is a surjective homomorphism.  In particular,
supposing that $S$ is a finite semigroup, if $f(X)$ generates $T$ and
$|S|=|T|$, then $\varphi:S\longrightarrow T$ is an isomorphism.

Conversely, let $\varphi:S\longrightarrow T$ be a homomorphism and let
$f:X\longrightarrow T$ be the restriction of $\varphi$ to $X$. Then, clearly,
$f$ must satisfy $R$.  \medskip

In view of Proposition~\ref{prop:isomgs} and the above observations, we have
the following interesting conclusion regarding isomorphisms of Rees quotient
numerical semigroups.

\begin{theorem}\label{th:qnsiso}
  Let $S_1$ and $S_2$ be two Rees quotient numerical semigroups with minimal
  generating systems $X_1$ and $X_2$, respectively. Let $\langle X_1\mid
  R\rangle$ be a (fixed) presentation of $S_1$.  If $|S_1|=|S_2|$ then the
  isomorphisms from $S_1$ to $S_2$ are precisely the homomorphisms
  $\varphi:S_1\longrightarrow S_2$ extending bijections $f:X_1\longrightarrow
  X_2$ satisfying $R$.
\end{theorem}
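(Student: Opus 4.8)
The plan is to prove the claimed set equality by establishing the two inclusions separately, leaning on Proposition~\ref{prop:isomgs} and on the general observations about presentations recorded immediately before the statement, where homomorphisms out of a semigroup are built from maps on a generating set that satisfy the defining relations.

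First I would handle the inclusion "every isomorphism is a homomorphism extending a bijection $X_1\to X_2$ satisfying $R$''. Let $\varphi\colon S_1\longrightarrow S_2$ be an isomorphism. By Proposition~\ref{prop:isomgs} we have $\varphi(X_1)=X_2$, so the restriction $f=\varphi|_{X_1}$ is a bijection from $X_1$ onto $X_2$ and $\varphi$ extends $f$ (viewing $X_1$ as a generating set of $S_1$). Since $\varphi$ is a homomorphism and $\langle X_1\mid R\rangle$ is a presentation of $S_1$, the ``conversely'' part of the observations preceding the theorem gives at once that $f$ satisfies $R$. Hence $\varphi$ has exactly the stated form.

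For the reverse inclusion, let $f\colon X_1\longrightarrow X_2$ be a bijection satisfying $R$ and let $\varphi\colon S_1\longrightarrow S_2$ be a homomorphism extending it. By the first part of those same observations, $\varphi$ is forced: it is the unique homomorphism extending $f$, namely $\varphi(s)=\phi(w_s)$, where $\phi\colon X_1^+\longrightarrow S_2$ is the homomorphism extending $f$ and $w_s$ is any fixed word of $X_1^+$ representing $s$. Moreover, since $f(X_1)=X_2$ generates $S_2$, the map $\varphi$ is surjective. Finally, $S_1$ and $S_2$ are finite and $|S_1|=|S_2|$, so a surjection between them is automatically a bijection; therefore $\varphi$ is an isomorphism, and the proof is complete.

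The argument is short because the substantive work is already in place: the universal property of a presentation (a map on generators satisfying $R$ extends to a homomorphism) and Proposition~\ref{prop:isomgs} (an isomorphism carries one minimal generating system onto the other). The single place where the hypothesis $|S_1|=|S_2|$ is used is the passage from surjectivity to bijectivity in the reverse inclusion; without it one obtains only the weaker statement that the surjective homomorphisms $S_1\to S_2$ are precisely those extending bijections $X_1\to X_2$ satisfying $R$. I do not expect a genuine obstacle here. The one bookkeeping point to keep in mind is the degenerate case $|X_1|\neq|X_2|$: then there are no bijections $X_1\to X_2$ at all, and by Proposition~\ref{prop:isomgs} there are no isomorphisms $S_1\to S_2$ either, so both sides of the asserted equality are empty and the statement holds vacuously.
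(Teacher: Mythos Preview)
Your proof is correct and follows exactly the route the paper intends: the paper does not give a formal proof but simply states the theorem as an immediate consequence of Proposition~\ref{prop:isomgs} together with the general observations on presentations recorded just above it, and you have spelled out precisely those two inclusions using precisely those ingredients. Your remark on the degenerate case $|X_1|\neq|X_2|$ is a reasonable addendum, though not strictly needed since Proposition~\ref{prop:isomgs} already forces both sides to be empty in that situation.
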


For a numerical semigroup $S$ with multiplicity $m$ and embedding dimension
$e$, we may compute a presentation with less than or equal to
$\frac{(2m-e+1)(e-2)}{2}+1$ relations
\cite{RosalesGarcia2009Book-Numerical}. Thus, if $I$ is an ideal of $S$ with
minimal ideal generating system $G$, regarding Theorem~\ref{prop:qnspremgs},
we may compute a presentation (with zero) for the \rqns~$S/I$ over its
minimal generating system with less than or equal to
$\frac{(2m-e+1)(e-2)}{2}+|G|+1$ relations. Therefore, Theorem~\ref{th:qnsiso}
gives us a reasonable practical method to compute all the isomorphisms
between two Rees quotient numerical semigroups (at least for the ones having
\textit{small-size} minimal generating systems) and, in particular, to
compute the automorphism group of a \rqns:
\begin{corollary}
  Let $S$ be a \rqns~with minimal generating system $X$ and let $\langle
  X\mid R\rangle$ be a (fixed) presentation of $S$. Then, the automorphisms
  of $S$ are the endomorphisms of $S$ that extend permutations of $X$
  satisfying $R$.
\end{corollary}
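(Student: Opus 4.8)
The plan is to deduce this statement as the special case $S_1=S_2=S$ (hence $X_1=X_2=X$) of Theorem~\ref{th:qnsiso}. First I would observe that, by definition, an automorphism of $S$ is nothing but an isomorphism from $S$ onto $S$, an endomorphism of $S$ is nothing but a homomorphism from $S$ to $S$, and a permutation of the (finite) set $X$ is nothing but a bijection $f\colon X\to X$. Since trivially $|S|=|S|$, Theorem~\ref{th:qnsiso} applies verbatim with $S_1=S_2=S$ and tells us that the isomorphisms from $S$ to $S$ are precisely the homomorphisms $\varphi\colon S\to S$ that extend bijections $f\colon X\to X$ satisfying $R$. Translating the three pieces of terminology just listed, this is exactly the assertion of the corollary.

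For completeness I would spell out the two half-implications implicit in the statement, both already contained in the discussion preceding Theorem~\ref{th:qnsiso}. On the one hand, if $\varphi$ is an automorphism of $S$, then Proposition~\ref{prop:isomgs} gives $\varphi(X)=X$, so the restriction $\varphi|_X$ is a permutation of $X$; being $\varphi$ a homomorphism, this permutation satisfies $R$, so $\varphi$ is an endomorphism of $S$ extending a permutation of $X$ satisfying $R$. On the other hand, if $f\colon X\to X$ is a permutation satisfying $R$, then $f$ extends (uniquely) to an endomorphism $\varphi$ of $S$; since $f(X)=X$ generates $S$ and $|S|=|S|$ is finite, the observation before Theorem~\ref{th:qnsiso} shows that $\varphi$ is bijective, hence an automorphism of $S$.

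Since the whole content is a direct specialization of Theorem~\ref{th:qnsiso}, I do not expect any real obstacle; the only point requiring the slightest care is to invoke the finiteness of $S$ (a \rqns\ is finite) in order to conclude that an endomorphism extending a permutation of $X$ satisfying $R$ is automatically surjective, and therefore an automorphism --- without finiteness, ``extends a permutation of $X$ satisfying $R$'' would only guarantee an endomorphism. Finally, I would close by remarking that this correspondence between the automorphism group of $S$ and the set of permutations of $X$ satisfying $R$ is precisely what makes the computation of $\mathrm{Aut}(S)$ effective, in view of the bound on the number of relations of a presentation of a \rqns\ recalled just before the corollary.
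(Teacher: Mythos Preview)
Your proposal is correct and matches the paper's approach: the corollary is stated in the paper without proof, immediately after Theorem~\ref{th:qnsiso}, as the specialization $S_1=S_2=S$, $X_1=X_2=X$. Your expanded argument spelling out both implications via Proposition~\ref{prop:isomgs} and the finiteness remark before Theorem~\ref{th:qnsiso} is exactly what the paper leaves implicit.
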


A recent paper by García-García and
Moreno~\cite{GarciaMoreno2012SF-morphisms} treat problems on morphisms of
commutative monoids which at first sight could be thought as similar to the
ones treated in this subsection, but we have not discovered any strong
connection.

%%%%% ===================================================================%%%%%

\section{Generators of the pseudovariety
  $\pv{N}\cap\pv{Com}$}\label{sec:pseudovarieties}
Recall that $\pv{C\NN}=\{\rqns(\NN,I_k)\mid k\in \NN \}$. We show that this
\textit{small} class, consisting of easily described finite semigroups, forms
a set of generators of the pseudovariety of finite nilpotent and commutative
semigroups.
\begin{theorem}\label{th:pseudovariety_generators}
  The class $\pv{C\NN}$ generates the pseudovariety $\pv{N}\cap\pv{Com}$.
\end{theorem}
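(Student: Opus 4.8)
The plan is as follows. One inclusion is immediate: by Remark~\ref{rem:qns-finite-commutative} every semigroup $\rqns(\NN,I_k)$ lies in $\pv{N}\cap\pv{Com}$, and since $\pv{N}\cap\pv{Com}$ is a pseudovariety (hence closed under finite products, subsemigroups and homomorphic images) it contains the pseudovariety generated by $\pv{C\NN}$. For the reverse inclusion I must show that every finite commutative nilpotent semigroup $S$ divides a finite direct product of members of $\pv{C\NN}$.

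First I would reduce to the case of a relatively free semigroup. Fix a generating set $s_1,\dots,s_n$ of $S$ and let $c$ be the nilpotency class of $S$, so that $S^c=\{0\}$ and hence every product of at least $c$ elements of $S$ equals $0$. Let $F$ be the free commutative nilpotent semigroup of class $c$ on generators $y_1,\dots,y_n$: concretely, $F$ consists of the monomials $y_1^{e_1}\cdots y_n^{e_n}$ with $1\le e_1+\cdots+e_n\le c-1$ together with a zero, multiplication being addition of exponent vectors, collapsing to $0$ as soon as the total degree reaches $c$. The assignment $y_i\mapsto s_i$ extends to a surjective homomorphism $F\to S$, so since pseudovarieties are closed under homomorphic images it suffices to prove that $F$ itself lies in the pseudovariety generated by $\pv{C\NN}$; I would do this by embedding $F$ into a finite direct product of semigroups $\rqns(\NN,I_k)$.

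The construction rests on the following family of homomorphisms. Writing the elements of $\rqns(\NN,I_k)$ as $1,2,\dots,k-1,\infty$, a choice of positive integers $a_1,\dots,a_n$ with $k\le c\min_i a_i$ determines a homomorphism $\varphi\colon F\to\rqns(\NN,I_k)$ by $\varphi(y_1^{e_1}\cdots y_n^{e_n})=\sum_i a_i e_i$ when this sum is smaller than $k$, and $\varphi(y_1^{e_1}\cdots y_n^{e_n})=\infty$ otherwise; the inequality $k\le c\min_i a_i$ is exactly what makes this well defined (it forces every monomial of degree $\ge c$ to be sent to $\infty$). It then suffices to produce, for each of the finitely many pairs of distinct elements of $F$, one such homomorphism separating them; the product of these homomorphisms is the desired embedding. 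Two elements of different total degree — in particular a nonzero monomial and the zero — are separated by the homomorphism with all $a_i=1$ and $k=c$, which sends $y_1^{e_1}\cdots y_n^{e_n}$ to its total degree, an element of $\{1,\dots,c-1\}$. Two distinct monomials $y^{e}$ and $y^{e'}$ of the same degree $d$ differ in some coordinate $i_0$; there I would take $a_{i_0}=N+1$, $a_i=N$ for $i\neq i_0$, and $k=cN$, with $N$ a large integer, so that $\varphi(y^{e})=Nd+e_{i_0}$ and $\varphi(y^{e'})=Nd+e'_{i_0}$ are distinct and, for $N$ large enough (using $d<c$), genuinely smaller than $k$, hence both different from $\infty$.

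The step I expect to be the main obstacle is precisely this last point. The constraint $k\le c\min_i a_i$ is tight, and the naive idea of simply giving a chosen variable a large weight makes mixed monomials overflow and collapse to $\infty$, destroying the separation; one must instead scale the cutoff $k$ and the weights $a_i$ up together while keeping the perturbation of the weight vector small relative to $N$, which is what the choice above accomplishes. (Alternatively, one could bypass the explicit construction by invoking the description of pseudovarieties of commutative semigroups in~\cite{Almeida1995Book-Finite}: the semigroups $\rqns(\NN,I_k)$ are, up to isomorphism, exactly the finite monogenic nilpotent semigroups, and it is part of that theory that these generate $\pv{N}\cap\pv{Com}$.)
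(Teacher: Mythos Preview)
Your argument is correct but follows a genuinely different route from the paper's own proof. The paper does not produce an explicit division: it invokes the basis theorem for pseudovarieties of commutative semigroups \cite[Theorem~6.2.6]{Almeida1995Book-Finite} to conclude that the pseudovariety $\pv{V}$ generated by $\pv{C\NN}$ is defined by $xy=yx$, $x^\omega=0$ together with a finite set $\Sigma$ of (pseudo)identities of the form $x_1^{\alpha_1}\cdots x_n^{\alpha_n}=x_1^{\beta_1}\cdots x_n^{\beta_n}$; it then checks, by substituting suitable small integers into $Q_r=\rqns(\NN,I_r)$ for $r$ large, that any such identity already forces $\alpha_i=\beta_i$ for all $i$ (and rules out $x_1^{\alpha_1}\cdots x_n^{\alpha_n}=0$), so $\Sigma$ adds nothing and $\pv{V}=\pv{N}\cap\pv{Com}$.

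By contrast, you reduce to the relatively free object and embed it directly into a finite product of $Q_r$'s via the weighted ``degree'' homomorphisms $y_i\mapsto a_i$. Your verification that the map is well defined (the condition $k\le c\min_i a_i$) and that the perturbed weights $a_{i_0}=N+1$, $a_i=N$, $k=cN$ separate distinct monomials of equal degree without overflowing is correct; the bound $Nd+e_{i_0}<cN$ needs only $N\ge c$, since $e_{i_0}\le d\le c-1$ and $c-d\ge 1$. Your approach is more self-contained --- it avoids the structure theory of commutative pseudovarieties entirely --- and makes the division explicit, which is informative in its own right. The paper's approach is shorter once one is willing to quote the basis theorem, and its substitution trick (testing with $x_j=1$ for $j\ne i$ and $x_i=2$) is the equational counterpart of your weight perturbation. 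Your parenthetical remark that the $\rqns(\NN,I_k)$ are exactly the finite monogenic nilpotent semigroups, and that the conclusion is part of the theory in~\cite{Almeida1995Book-Finite}, is essentially the route the paper actually takes.
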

\begin{proof}
  Let $\pv{V}$ be the pseudovariety generated by $\pv{C\NN}$. Then, as
  $\pv{V}\subset\pv{Com}$, by \cite[Theorem 6.2.6]{Almeida1995Book-Finite},
  $\pv{V}$ is admits a finite basis of pseudoidentities of the form
$$
\Sigma\cup\{xy=yx,\pi(x)x^\omega=x^\omega\},
$$
where $\pi$ is a pseudoword on one variable such that $\pv{V}\cap\pv{G}$ is
defined by the pseudoidentities $\pi=1$ and $xy=yx$ and $\Sigma$ consists of
pseudoidentities which are valid in $\pv{V}$ of the form
$x_1^{\alpha_1}\cdots x_n^{\alpha_n}=x_1^{\beta_1}\cdots x_n^{\beta_n}$, with
$\alpha_1,\ldots,\alpha_n,\beta_1,\ldots,\beta_n\in\N_0\cup\{\omega\}$ and
variables $x_1,\ldots,x_n$ not necessarily distinct.

As $\pv{N}$ is defined by the pseudoidentity $x^\omega=0$ and
$\pv{V}\subset\pv{N}$, we may replace $\pi(x)x^\omega=x^\omega$ simply by
$x^\omega=0$.

Next, suppose that $\pv{V}$ is strictly contained in $\pv{N}\cap\pv{Com}$.
Hence $\pv{V}$ must satisfy a non-trivial pseudoidentity, which is not
satisfied by $\pv{N}\cap\pv{Com}$, of the form $x_1^{\alpha_1}\cdots
x_n^{\alpha_n}=x_1^{\beta_1}\cdots x_n^{\beta_n}$, with
$\alpha_1,\ldots,\alpha_n,\beta_1,\ldots,\beta_n\in\N_0\cup\{\omega\}$ and
$x_1,\ldots,x_n$ not necessarily distinct. As $\pv{V}$ also satisfies
$x^\omega=0$, it follows that this pseudoidentity must be an identity of the
form $x_1^{\alpha_1}\cdots x_n^{\alpha_n}=x_1^{\beta_1}\cdots x_n^{\beta_n}$,
with $\alpha_1,\ldots,\alpha_n,\beta_1,\ldots,\beta_n\in\N_0$, or of the form
$x_1^{\alpha_1}\cdots x_n^{\alpha_n}=0$, with
$\alpha_1,\ldots,\alpha_n\in\N$. Moreover, in both cases, we may assume that
the variables $x_1,\ldots,x_n$ are distinct.

Let $r$ be a positive integer and consider the semigroup
$Q_r=\rqns(\NN,I_r)$. If $r > \alpha_1+\cdots+\alpha_n$, then we have
$\alpha_1+\cdots+\alpha_n\neq\infty$ in $Q_r$, whence $Q_r$ does not satisfy
the identity $x_1^{\alpha_1}\cdots x_n^{\alpha_n}=0$ and so the taken
pseudoidentity cannot be of this form.  Now, let $r >
\max\{\alpha_1+\cdots+\alpha_{i-1}+2\alpha_i+\alpha_{i+1}+\cdots+\alpha_n\mid
2\le i\le n\}$. Since $Q_r$ must satisfy the identity $x_1^{\alpha_1}\cdots
x_n^{\alpha_n}=x_1^{\beta_1}\cdots x_n^{\beta_n}$, on one hand, we have
\begin{equation}\label{eq:1}
  \infty\ne\alpha_1+\cdots+\alpha_{i-1}+\alpha_i+\alpha_{i+1}+\cdots+\alpha_n = 
  \beta_1+\cdots+\beta_{i-1}+\beta_i+\beta_{i+1}+\cdots+\beta_n
\end{equation}
in $Q_r$ and, on the other hand, for $2\le i\le n$ (by considering $x_j=1$,
for $j\ne i$, and $x_i=2$), we have
\begin{equation}\label{eq:2}
  \infty\ne\alpha_1+\cdots+\alpha_{i-1}+2\alpha_i+\alpha_{i+1}+\cdots+\alpha_n = 
  \beta_1+\cdots+\beta_{i-1}+2\beta_i+\beta_{i+1}+\cdots+\beta_n
\end{equation}
in $Q_r$. Hence, for each $2\le i\le n$, by combining (\ref{eq:1}) with
(\ref{eq:2}), we deduce that $\alpha_i=\beta_i$. Then, by (\ref{eq:1}) it
follows also that $\alpha_1=\beta_1$. Therefore, the identity
$x_1^{\alpha_1}\cdots x_n^{\alpha_n}=x_1^{\beta_1}\cdots x_n^{\beta_n}$ is
trivial, which is a contradiction.

Thus $\pv{V}=\pv{N}\cap\pv{Com}$, as required.
\end{proof}

%%%%% ===================================================================%%%%%
\section{Open problems}\label{sec:open_problems}
In this section we state some natural questions which we have not yet been
able to answer, opening this way a line of research.

We divide the section into subsections, one of which contains a table
where the numerical semigroups with Frobenius number up to $10$ are
listed. An analysis of this table (and some other computations of the same
kind) leads to several questions and conjectures. We state a decidability
question in the last subsection.
%%%%% -------------------------------------------------------------------%%%%%
\subsection{Some more terminology}\label{subsec:more_terminology}
Next we grasp some more terminology from our references.

A numerical semigroup is \emph{irreducible} if it cannot be expressed as the
intersection of two numerical semigroups properly containing it.

An irreducible numerical semigroup is either \emph{symmetric}, if its
Frobenius number is odd, or is \emph{pseudo-symmetric}, if its Frobenius
number is even.

The elements of a numerical semigroup that are not greater than the conductor
are called \emph{small elements}. (The name is taken from the
\emph{numericalsgps} \GAP\
package~\cite{DelgadoMoraisGarcia-Sanchez:numericalsgps}, although the
current version of the package always includes the $0$ as a small element of
a semigroup.)

As the set of small elements of a numerical semigroup completely determines
it, we can represent the numerical semigroup through its small elements.
%%%%% -------------------------------------------------------------------%%%%%
\subsection{A table of semigroups with small Frobenius
  numbers}\label{subsec:table}
Table~\ref{table:FrobLess10} lists the numerical semigroups whose conductor
is not greater than $11$. The use of different colors (or gray tones) will be
explained in the next subsection.

The table has been constructed with the help of the \emph{numericalsgps}
\GAP\ package~\cite{DelgadoMoraisGarcia-Sanchez:numericalsgps}. For instance,
the data in the following example was used to produce the line corresponding
to the semigroups with Frobenius number $8$. 
\begin{example}\label{ex:gap_for_table}
  The following lines correspond to a \GAP\ session where the irreducible
  numerical semigroups with Frobenius number $8$ are determined.  {\small
\begin{verbatim}
gap> LoadPackage("numericalsgps");
true
gap> n := 8;; 
gap> irrn := IrreducibleNumericalSemigroupsWithFrobeniusNumber(n);;
gap> List(irrn,s->SmallElementsOfNumericalSemigroup(s));
[ [ 0, 5, 6, 7, 9 ], [ 0, 3, 6, 7, 9 ] ]
\end{verbatim}
  } \medskip

  \hspace{-\parindent} In the same \GAP\ session, the non irreducibles can be
  determined as follows: {\small
\begin{verbatim}
gap> fn:=NumericalSemigroupsWithFrobeniusNumber(n);;
gap> nirrn:=Filtered(fn,s->not IsIrreducibleNumericalSemigroup(s));;
gap> List(nirrn,s->SmallElementsOfNumericalSemigroup(s));
[ [ 0, 5, 7, 9 ], [ 0, 6, 7, 9 ], [ 0, 7, 9 ], [ 0, 5, 6, 9 ], 
  [ 0, 3, 6, 9 ], [ 0, 5, 9 ], [ 0, 6, 9 ], [ 0, 9 ] ]
\end{verbatim}
  }
\end{example}
The numbers in the first column of Table~\ref{table:FrobLess10} indicate the
Frobenius numbers of the numerical semigroups on the right cells. The second
column contains the irreducible semigroups and the third contains the non
irreducible ones.
\begin{table}[ht]
  \centering
  {\small
  $$\begin{array}{|l|l|l|}
    \hline
    F&
    \mbox{Irreducibles}&
    \mbox{Non irreducibles}\\
%%%%%%%%%%%%%%%%%%%%%%%%%
    \hline
    1&
    {\{ 2\all \}}&
    \\
%%%%%%%%%%%%%%%%%%%%%%%%%
    \hline
    2&
    {\{ 3\all \}}&
    \\
%%%%%%%%%%%%%%%%%%%%%%%%%
    \hline
    3&
    {\{ 2, 4\all \}}&
    {\{ 4\all \}}
    \\
%%%%%%%%%%%%%%%%%%%%%%%%%
    \hline
    4&
    {\{ 3, 5\all \}}&
    {\{ 5\all \}}
    \\
%%%%%%%%%%%%%%%%%%%%%%%%%
    \hline
    5&
    { \{ 3, 4, 6\all \}},{ \color{red}\{ 2, 4, 6\all \}}&
    {\{ 3, 6\all \}},{ \{ 4, 6\all \}}, {\{ 6\all \}} 
    \\
%%%%%%%%%%%%%%%%%%%%%%%%%
    \hline
    6&
    { \{ 4, 5, 7\all \}}&
    {\{ 5, 7\all \}},{ \{ 4, 7\all \}}, {\{ 7\all \}} 
    \\
%%%%%%%%%%%%%%%%%%%%%%%%%
    \hline
    7&
    { \{ 3, 5, 6, 8\all \}},{ \color{red}\{ 4, 5, 6, 8\all \}},&
    { \{ 4, 5, 8\all \}},{ \{ 5, 6, 8\all \}},{
      \color{red}\{ 3, 6, 8\all \}},{ \{ 5, 8\all \}}, 
    \\
%%%%%%%%%%%%%
    &{ \color{green}\{ 2, 4, 6, 8\all \}}&
    {\{ 4, 8\all \}}, { \{ 4, 6, 8\all \}},{
      \{ 6, 8\all \}}, {\{ 8\all \}} 
    \\
%%%%%%%%%%%%%%%%%%%%%%%%%
    \hline
    8&
    { \color{red}\{ 5, 6, 7, 9\all \}},{ \{ 3, 6, 7, 9\all \}}&
    { \{ 5, 7, 9\all \}},{ \{ 6, 7, 9\all \}},{
      \{ 7, 9\all \}},{ \{ 5, 6, 9\all \}},
    \\
%%%%%%%%%%%%%
    &&
    { \color{red}\{ 3, 6, 9\all \}}, {\{ 5, 9\all \}},
    {\{ 6, 9\all \}}, {\{ 9\all \}} 
    \\
%%%%%%%%%%%%%%%%%%%%%%%%%
    \hline
    9&
    { \{ 5, 6, 7, 8, 10\all \}},{ \color{red}\{ 4,
      6, 7, 8, 10\all \}},&  
    { \color{red}\{ 5, 6, 7, 10\all \}},{ \color{red}\{ 6, 7, 8, 10\all
      \}},{ \color{red}\{ 5, 7, 8, 10\all \}},  

    \\
%%%%%%%%%%%%%
    &{ \color{green}\{ 2, 4, 6, 8, 10\all \}}&
    { \{ 4, 7, 8, 10\all \}},{ \color{red}\{ 5, 6, 8, 10\all
      \}},{ \{ 6, 7, 10\all \}}, 
    \\
%%%%%%%%%%%%%
    &&
    \{{ 5, 7, 10\all \}}, 
    { \{ 5, 6, 10\all \}},{ \{ 7, 8, 10\all \}},
    \\
%%%%%%%%%%%%%
    &&
    { \{ 4, 6, 8, 10\all \}},{ \{ 6, 8, 10\all \}}, 
    { \{ 5, 8, 10\all \}},
    \\
%%%%%%%%%%%%%
    &&
    {\color{red}\{ 4, 8, 10\all \}},{ \{ 7, 10\all \}},{
      \{ 6, 10\all \}},  
    \\
%%%%%%%%%%%%%
    &&
    {\{ 5, 10\all \}}, 
    {\{ 8, 10\all \}}, {\{ 10\all \}}
    \\
%%%%%%%%%%%%%%%%%%%%%%%%%
    \hline
    10&
    { \color{red}\{ 4, 7, 8, 9, 11\all \}},{ \{ 6,
      7, 8, 9, 11\all \}},&
    { \color{red}\{ 7, 8, 9, 11\all \}},{ \{ 4, 8, 9, 11\all
      \}},{ \color{red}\{ 6, 7, 9, 11\all \}}, 
    \\
%%%%%%%%%%%%%
    &{ \color{blue}\mathbf{\{ 3, 6, 8, 9, 11\all \}}}&
    { \{ 7, 9, 11\all \}},{ \color{green}\{ 3, 6, 9, 11\all \}},{
      \color{red}\{ 6, 7, 8, 11\all \}}, 
    \\
%%%%%%%%%%%%%
    &&
    { \{ 4, 7, 8, 11\all \}},{ \{ 6, 7, 11\all
      \}},{ \color{red}\{ 6, 8, 9, 11\all \}}, 
    \\
%%%%%%%%%%%%%
    &&
    { \{ 8, 9, 11\all \}},{ \{ 6, 9, 11\all \}},{
      \{ 9, 11\all \}},  
    \\
%%%%%%%%%%%%%
    &&
    { \{ 7, 8, 11\all \}},{ \{ 6, 8, 11\all \}},{
      \color{red}\{ 4, 8, 11\all \}},  
    \\
%%%%%%%%%%%%%
    &&
    { \{ 7, 11\all \}},{ \{ 6, 11\all \}},{
      \{ 8, 11\all \}}, {\{ 11\all \}} 
    \\
%%%%%%%%%%%%%%%%%%%%%%%%%
    \hline
  \end{array}
$$
}
\caption{Numerical semigroups with conductor up to $11$}
\label{table:FrobLess10}
\end{table}
%%%%% -------------------------------------------------------------------%%%%%
\subsection{Analysis of the table}\label{subsec:table_analysis}
Let $S$ be a numerical semigroup. We observe that the size of the Rees
quotient $\rqns(S,I_{c(S)})$ is precisely the number of small elements of
$S$.

The set of small elements can also be used to represent the \rqns\ where the
conductor has been taken as cutting point.  Notice that, contrary to what
happens when using this form of representing a numerical semigroup, several
sets of integers can represent the same quotient semigroup (since we do not
make any distinction between isomorphic objects).

In this way, Table~\ref{table:FrobLess10} represents also the Rees quotient
numerical semigroups obtained from the numerical semigroups with Frobenius
number less than $11$ through the use of the conductor as cutting point.
There are many repetitions, since many of the numerical semigroups in the
table lead to isomorphic quotients. Quotients of the same size (which, as
observed, are obtained from numerical semigroups with the same number of
small elements) that are isomorphic are represented using the same color.

The reader can check without any difficulty that the isomorphism classes of
the quotient semigroups in the table are completely determined by the size
and the nilpotency class of the semigroups. In fact, in all cases of
semigroups of the same size and nilpotency class, it is straightforward to
construct an isomorphism.  (We observe that these arguments would need to be
greatly refined in order to deal with numerical semigroups with larger
Frobenius numbers.)

The following seems to be true, but we still have no proof:
\begin{conjecture}\label{conj:diff_symmetric_diff_quotients}
  Different symmetric numerical semigroups correspond to non-isomorphic
  quotient numerical semigroups.
\end{conjecture}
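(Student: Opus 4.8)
The plan is to proceed in two stages: first reduce, by a size count, to the case of a common Frobenius number, and then attempt to reconstruct a symmetric numerical semigroup from the abstract structure of its Rees quotient.

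\textbf{Stage 1: the size of the quotient pins down the Frobenius number.} Let $S$ be symmetric with Frobenius number $F=F(S)$; then $F$ is odd and, by the standard count of gaps of a symmetric numerical semigroup (see \cite{RosalesGarcia2009Book-Numerical}), $S$ has exactly $(F+1)/2$ gaps. The finite part of $\rqns(S,I_{c(S)})$ is $S\cap[1,F]$, which therefore has $F-(F+1)/2=(F-1)/2$ elements, so $|\rqns(S,I_{c(S)})|=(F+1)/2$. Hence, if $S_1$ and $S_2$ are symmetric and $\rqns(S_1,I_{c(S_1)})\cong\rqns(S_2,I_{c(S_2)})$, then $F(S_1)=F(S_2)$ (and also $g(S_1)=g(S_2)$). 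So it suffices to fix an odd integer $F$ and show that two distinct symmetric numerical semigroups with Frobenius number $F$ have non-isomorphic quotients.

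\textbf{Stage 2: reconstructing $S$ from $Q:=\rqns(S,I_{c(S)})$.} By Proposition~\ref{prop:isomgs}, an isomorphism of \rqns's carries the minimal generating system onto the minimal generating system; it also preserves the grading $Q^i\setminus Q^{i+1}$, hence the nilpotency class and, more to the point, the full list of $\mathbb{Z}$-linear relations among the minimal generators $n_1<\cdots<n_e$ of $S$ that are \emph{witnessed inside} $Q$: namely the equalities $n_{i_1}+\cdots+n_{i_p}=n_{j_1}+\cdots+n_{j_q}$ both sides of which are $<c(S)$ (``small relations''), together with the data recording which sums of generators already reach $\infty$ (i.e. are $\ge c(S)$). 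The reconstruction strategy is to prove that, for a symmetric numerical semigroup, the small relations generate — as a subgroup of $\mathbb{Z}^e$ — the whole relation lattice $\{(a_1,\dots,a_e)\in\mathbb{Z}^e \mid \sum a_i n_i=0\}$, which has rank $e-1$ and is saturated. Granting this, $Q$ determines $(n_1,\dots,n_e)$ up to a single positive rational scalar, and the value $F=c(S)-1$ recovered in Stage~1 fixes that scalar; thus $Q$ determines $S$, and the conjecture follows by contraposition.

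\textbf{The main obstacle.} The crux, and where the real difficulty lies, is the claim that the small relations already generate the relation lattice. For a general numerical semigroup this can fail: a minimal relation (a ``trade'') may involve sums well above the conductor, so that $Q$ simply does not see it; the conjecture asserts that symmetry forbids this. I would first settle small embedding dimension: for $e=2$, symmetry is automatic, $S=\langle a,b\rangle$ with the single defining relation $b\,{\cdot}\,a=a\,{\cdot}\,b$ (that is, $\mathrm{lcm}(a,b)$ split two ways) and $F=ab-a-b$, and one checks directly that $Q$ displays enough multiples of $a$ and of $b$ below $c(S)$ to recover $\{a,b\}$; for $e=3$ one uses the explicit Herzog–Bresinsky description of $3$-generated symmetric (complete intersection) semigroups and their relations. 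For the inductive step I would invoke Delorme's gluing characterization — every symmetric numerical semigroup of embedding dimension $\ge 2$ is a gluing of two symmetric ones of smaller embedding dimension — and try to show both that the gluing relation appears among the small relations of $Q$ and that the two factor quotients can be extracted from $Q$. Making this reduction rigorous, in particular controlling the sizes of the minimal relations produced by a gluing relative to the conductor, is the point at which a genuinely new idea seems to be required, which is consistent with the statement having been left open here.
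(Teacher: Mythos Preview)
The paper does not prove this statement: it is explicitly labeled a conjecture and introduced with ``the following seems to be true, but we still have no proof''. So there is no paper proof to compare against; the relevant question is only whether your proposal actually closes the gap.

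Your Stage~1 is correct and is a useful observation: for symmetric $S$ one has $g(S)=(F(S)+1)/2$, hence $|\rqns(S,I_{c(S)})|=(F(S)+1)/2$, so an isomorphism of quotients forces equal Frobenius numbers. Your Stage~2 strategy, however, contains a substantive error that blocks the induction you have in mind. Delorme's theorem characterizes \emph{complete intersection} numerical semigroups as precisely those obtainable by iterated gluing; it is not true that every symmetric numerical semigroup is a gluing of smaller symmetric ones. For embedding dimension $e\le 3$ symmetric and complete intersection coincide (Herzog), but for $e\ge 4$ there exist symmetric numerical semigroups that are not complete intersections and hence not gluings at all. So your inductive scheme, as stated, cannot even be launched in that range.

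Beyond this, the heart of the matter---that the ``small relations'' (those with both sides below $c(S)$) already generate the full relation lattice when $S$ is symmetric---is exactly the unproved claim, and you yourself flag it as needing a new idea. That is an honest assessment, and it matches the paper's stance: the statement remains open. If you want to push further, a more realistic next step than gluing would be to exploit the symmetry $x\mapsto F(S)-x$ between gaps and non-gaps to try to force each minimal trade to have a representative with both sides bounded by $F(S)$; but even formulating this precisely is nontrivial, and no such argument is supplied here.
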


A quick look at the first part of the table could lead to ask whether one
could obtain all the Rees quotient numerical semigroups determined by a
cutting point by using the symmetric numerical semigroups and the conductors
as cutting points. The last part of the table shows that this is not the
case, since the Rees quotient of the pseudo-symmetric numerical semigroup
$\{3, 6, 8, 9, 11\all \}$ obtained by cutting through its conductor is not isomorphic
to any Rees quotient of a symmetric numerical semigroup with the same number
of small elements cutting by its conductor. (Notice that the nilpotency class
of $\{3, 6, 8, 9, \infty \}$ is 4, while the nilpotency classes of the
quotients obtained from symmetric numerical semigroups with Frobenius number
$9$ are $2$, $3$ or $5$.)

One could now make the same question replacing symmetric by irreducible,
although the unicity would in this case be out of question. We state the
question precisely:
\begin{question}
  Let $N$ be a numerical semigroup and let $k$ be a cutting point. Does there
  exist an irreducible numerical semigroup $M$ such that
  $\rqns(N/I_k)\simeq\rqns(M,I_{c(S)})$?
\end{question}
The answer is ``No''! Consider the semigroup $\langle 4,11,13,18 \rangle$ and
take the conductor, $15$, as cutting point. The quotient is a nilpotent
semigroup of size $6$ which has $3$ minimal generators and nilpotency class $4$,
as the following \GAP\ session may help to confirm.  {\small
\begin{verbatim}
gap> N := NumericalSemigroup(4,11,13,18);;
gap> SmallElementsOfNumericalSemigroup(N);
[ 0, 4, 8, 11, 12, 13, 15 ]
\end{verbatim}
}

From basic results on irreducible numerical semigroups
(see~\cite{RosalesGarcia2009Book-Numerical}) it follows that an irreducible
numerical semigroup with $6$ small elements must have Frobenius number $11$
or $12$. These can be computed as follows:
\begin{example}\label{ex:nilpotent_with_frob_11_12}
{\small
\begin{verbatim}
gap> n := 11;;
gap> irrn := IrreducibleNumericalSemigroupsWithFrobeniusNumber(n);;
gap> List(irrn,s->SmallElementsOfNumericalSemigroup(s));
[ [ 0, 5, 7, 8, 9, 10, 12 ], [ 0, 4, 5, 8, 9, 10, 12 ], 
  [ 0, 3, 6, 7, 9, 10, 12 ], [ 0, 6, 7, 8, 9, 10, 12 ], 
  [ 0, 2, 4, 6, 8, 10, 12 ], [ 0, 4, 6, 8, 9, 10, 12 ] ]
gap> n := 12;;
gap> irrn := IrreducibleNumericalSemigroupsWithFrobeniusNumber(n);;
gap> List(irrn,s->SmallElementsOfNumericalSemigroup(s));
[ [ 0, 7, 8, 9, 10, 11, 13 ], [ 0, 5, 8, 9, 10, 11, 13 ] ]
\end{verbatim}
}
\end{example}

The only one with nilpotency class $4$ is obtained from $\{3, 6, 7, 9, 10, 12\all \}$, but
it only has $2$ minimal generators, therefore is not isomorphic to $\{4, 8,
11, 12, 13, \infty \}$.

%%%%% -------------------------------------------------------------------%%%%%
\subsection{Notable elements in quotients}\label{subsec:notable_in_quotients}
It would be interesting to be able to find a reasonable correspondence
between the definitions of notable elements in numerical semigroups given in
Subsection~\ref{subsec:notable} and
similar notions to be defined in Rees quotient numerical semigroups. For
instance, one should be able to give a reasonable definition for the Frobenius
number of a \rqns.  At some point of our research we had the hope that one
could do this via some irreducible numerical semigroup in the same
isomorphism class, but the examples in the previous subsection show that
something different must be tried.

%%%%% -------------------------------------------------------------------%%%%%
\subsection{Decidability}\label{subsec:decidability}
\begin{question}\label{question:decidable_strong}
  Let $Q$ be a finite commutative nilpotent semigroup. Give an effective way
  to construct a numerical semigroup $S$ and an ideal $I_S$ such that $Q$ and
  $S/I_S$ are isomorphic, if such numerical semigroup and ideal exist.
\end{question}
As usual, we say that a class of finite semigroups is decidable if there is
an algorithm that having as input a finite semigroup, it outputs whether the
semigroup belongs to the class given.  The following question, to which
Section~\ref{sec:pseudovarieties} is related and which would have a positive
answer if Question~\ref{question:decidable_strong} had a positive answer,
appears naturally:
\begin{question}\label{quest:decidable}
  Is $\pv{RQNS}$ decidable?
\end{question}
%%%%% ===================================================================%%%%%
\section{Acknowledgments}
We would like to thank Pedro García-Sánchez for encouragement and helpful
comments related to this work. 

%%%%% ===================================================================%%%%%


\begin{thebibliography}{1}

\bibitem{Almeida1995Book-Finite} J.~Almeida.  \newblock {\em Finite
    Semigroups and Universal Algebra}.  \newblock World Scientific,
  Singapore, 1995.

\bibitem{BarucciDobbsFontana1997Book-Maximality} V.~Barucci, D.~E. Dobbs, and
  M.~Fontana.  \newblock {\em Maximality properties in numerical semigroups
    and applications to one-dimensional analytically irreducible local
    domains}.  \newblock Number 598 in Memoirs of the American Mathematical
  Society. American Mathematical Society, 1997.

\bibitem{DelgadoMoraisGarcia-Sanchez:numericalsgps} M.~Delgado, J.~Morais,
  and P.~Garc{\'\i}a-S{\'a}nchez.  \newblock \emph{Numericalsgps -- a
    \textsf{GAP} package on numerical semigroups}, December 2011.  \newblock
  Version number 0.971. \newblock Available via
  \texttt{http://www.gap-system.org/}.

\bibitem{Distler2012tr-Finite} A.~Distler.  \newblock Finite nilpotent
  semigroups of small coclass.  \newblock Communications in Algebra, to
  appear.  \newblock Preprint available in arXiv:1205.2817v1.

\bibitem{GarciaMoreno2012SF-morphisms} J. I. García García and M. A. Moreno
  Frías. \newblock On morphisms of commutative monoids. \newblock Semigroup Forum, 2012, 84, 333-341

\bibitem{RosalesGarcia2009Book-Numerical} J.~Rosales and
  P.~Garc{\'\i}a~S{\'a}nchez.  \newblock {\em Numerical Semigroups}.
  \newblock Springer, 2009.

\bibitem{Ruskuk2000phd-Semigroup} N. Ruskuk.  \newblock Semigroup
  Presentations.  \newblock P.H.D. thesis.  \newblock University of
  St. Andrews, 2000.

\bibitem{GAP4} The GAP~Group, \emph{GAP -- Groups, Algorithms, and
    Programming, Version 4.5.5}; 2012, \newblock Available via
  \texttt{http://www.gap-system.org/}.
\end{thebibliography}
\end{document}